\newcommand{\hide}[1]{}
\def\textcolor#1{}
\newcommand{\N}{\mathbb{N}}
\newcommand{\R}{\mathbb{R}}
\newcommand{\C}{\mathbb{C}}
\renewcommand{\S}{\mathbb{S}}
\newcommand{\disk}{\mathbb{D}}
\newcommand{\Cc}{\widehat{{\C}}}
\newcommand{\Circle}{{\mathbb S}^1}
\newcommand{\Sphere}{{\mathbb S}^2}
\newcommand{\Deltahat}{\hat\Delta}
\renewcommand{\tilde}{\widetilde}
\renewcommand{\rho}{\varrho}
\renewcommand{\phi}{\varphi}
\renewcommand{\theta}{\vartheta}
\def\ol{\overline}
\newcommand\ovl[1]{\overline{#1}}
\newcommand{\sm}{\setminus}
\renewcommand{\ge}{\geqslant}
\renewcommand{\le}{\leqslant}
\newcommand{\Lefschetz}{\marginpar{Lefschetz}}
\renewcommand{\Lefschetz}{}
\theoremstyle{theorem}
\newtheorem{theorem}{Theorem}[section]
\newtheorem{lemma}[theorem]{Lemma}
\newtheorem{proposition}[theorem]{Proposition}
\newtheorem{corollary}[theorem]{Corollary}
\newtheorem{Theorem}{Theorem}
\theoremstyle{definition}
\newtheorem{definition}[theorem]{Definition}
\theoremstyle{remark}
\newtheorem*{remark}{\textsc{Remark}}
\newcounter{reminder}
\newtheoremstyle{claim}
  {}
  {}
  {\itshape}
  {0pt}
  {\scshape}
  {.}
  { }
  {\thmname{#1}\thmnumber{ #2}\thmnote{ (#3)}}
\theoremstyle{claim}
\newtheorem{claim}{Claim}
\numberwithin{equation}{section}
\title[Postcritically Fixed Newton Maps]{A Combinatorial Classification of Postcritically Fixed Newton Maps}
\subjclass[2000]{30D05, 37F10, 37F20}
\author[K.\,Drach]{Kostiantyn Drach}
\author[Ya.\,Mikulich]{Yauhen Mikulich}
\author[J.\,R\"uckert]{Johannes R\"uckert}
\author[D.\,Schleicher]{Dierk Schleicher}
\address{Jacobs University Bremen, Research I, Campus Ring 1, 28759 Bremen, Germany}
\email{k.drach@jacobs-university.de}
\email{y.mikulich@gmail.com}
\email{jrueckert@jacobs-alumni.de}
\email{dierk@jacobs-university.de}
\thanks{Research was partially supported by the ERC advanced grant ``HOLOGRAM''. The second named author was supported by the Deutsche Forschungsgemeinschaft SCHL 670/2-1. The third author was partly supported by a Doktorandenstipendium of the German Academic Exchange Service (DAAD)}
\begin{document}
\begin{abstract}
We give a combinatorial classification for the class of postcritically fixed Newton maps of polynomials as dynamical systems. This lays the foundation for classification results of more general classes of Newton maps.

A fundamental ingredient is the proof that for every Newton map (postcritically finite or not) every connected component of the basin of an attracting fixed point can be connected to $\infty$ through a finite chain of such components.
\end{abstract}
\maketitle

\section{Introduction}

One of the most important open problems in rational dynamics is understanding the structure of the space of rational functions of a fixed degree $d\geqslant 2$. This problem is today wide open.

Newton maps have long been known as useful tools for numerical root finding at least when approximate roots are known, but has recently turned out to be extremely efficient also for finding all roots of complex polynomials of large degrees of several million \cite{NewtonEfficient, NewtonRobin} and even more than a billion \cite{NewtonRobinMarvin}. Moreover, Newton maps of polynomials form an interesting class of rational maps that is more accessible for studying than the full space of rational maps. Hence, a substantial intermediate goal in the classification of \emph{all} rational maps is to gain an understanding of the space of Newton maps; in fact, to this day, these maps form the largest family of rational maps beyond polynomials for which we have such an understanding (of the postcritically finite maps); this is a combination of our present paper and the subsequent work in \cite{LMS1,LMS2}. Newton maps form a fundamental ingredient in a description of general rational maps via a natural decomposition \cite{DecompositionDimaMisha}.

In this paper, we present a theorem that structures the dynamical plane of all Newton maps (postcritically finite or not), and then use this result to construct a graph that classifies those Newton maps whose critical orbits all terminate at fixed points. Newton maps of degree $1$ and $2$ are trivial, and we exclude these cases from our investigation. Let us make precise what we mean by a Newton map.

\begin{definition}[Newton map]
\label{Def_NewtonMap}
A rational map $f\colon\Cc\to\Cc$ of degree $d\ge 2$ is called a \emph{Newton map} if there is a polynomial $p\colon\C\to\C$ so that $f$ is the rational map arising in the Newton-Raphson root-finding method of $p$: that is,  $f(z)=z-p(z)/p'(z)$.
\end{definition}

For simplicity, we say that $f$ is the Newton map of $p$. The fixed points of $f$ in $\C$ are exactly the distinct roots of $p$: every such fixed point is attracting with multiplier $(m-1)/m$, where $m\ge 1$ is the multiplicity as a root of $p$. In particular, simple roots are superattracting. Every Newton map has another fixed point at $\infty$; it is repelling with multiplier $d'/(d'-1)$, where $d'$ is the degree of $p$.

A polynomial $p$ and its Newton map have the same degree if and only if all roots of $p$ are simple; in general, the degree of the Newton map equals the number of distinct roots of $p$.

Recall that our focus in this paper is on maps of degree $d\ge 3$: the case $d=2$ is trivial and excluded without explicit mention.

The rational maps that arise as Newton maps can be described explicitly as follows:
\begin{proposition}[Head's theorem]
\label{Prop_Head}
A rational map $f$ of degree $d \geqslant 2$ is a Newton map if and only if $\infty$ is a repelling fixed point of $f$ and for each fixed point $\xi\in\C$, there exists an integer $m\geqslant 1$ such that $f'(\xi)=(m-1)/m$.
\qed
\end{proposition}

This result is folklore; a proof can be found in \cite[Proposition~2.8]{RS}. The special case of postcritically finite Newton maps (for which all $m=1$) goes back to \cite[Proposition 2.1.2]{Head}. In general, for a root of $p$ of multiplicity $m$, the multiplier of the corresponding attracting fixed point of $f$ equals $(m-1)/m$, so that $f$ can be postcritically finite only if all roots of $p$ are simple.

\begin{remark}
There are also transcendental entire functions $h$ for which the  associated Newton maps are rational: these maps have the form $h=p e^q$ with polynomials $p$ and $q$. These too can be described (see again \cite[Proposition~2.8]{RS} and especially the recent PhD thesis of Khudoyor Mamayusupov \cite{KhudoyorThesis}); their dynamics is remarkably similar to the dynamics of Newton maps of polynomials as considered here; for details, see \cite{KhudoyorThesis,KhudoyorFundamenta,KhudoyorClassification}. 
\end{remark}

\begin{definition}[Immediate basin]
\label{Def_ImmediateBasin}
Let $f$ be a Newton map and $\xi\in\C$ a fixed point of $f$. Let $B_{\xi}=\{z\in\C\,:\, \lim_{n\to\infty}f^{\circ n}(z)=\xi\}$ be the \emph{basin (of attraction)} of $\xi$.
The connected component of $B_\xi$ containing $\xi$ is called the \emph{immediate basin} of $\xi$ and denoted $U_\xi$.
\end{definition}
Clearly, $B_\xi$ is open. By a theorem of Przytycki \cite{Przytycki}, $U_{\xi}$ is simply connected and $\infty\in\partial U_\xi$ is an accessible boundary point; in fact, a result of Shishikura \cite{Shishikura} implies that every component of the Fatou set is simply connected. 

Our first result is the following; it applies to \emph{all} (polynomial) Newton maps, not just postcritically fixed or finite ones.
\begin{Theorem}[Preimages connected]
\label{Thm_PreimagesConnected}
Let $f:\Cc\to\Cc$ be an arbitrary Newton map. Denote its fixed points in $\C$ by $\xi_1,\dots,\xi_d$. Then every component of every $B_{\xi_i}$ can be connected to $\infty$ by a curve through the closures of finitely many components of $\bigcup_{i=1}^d B_{\xi_i}$ so that only finitely many boundary points of $B_{\xi_i}$ are on the curve, and these are necessarily iterated preimages of $\infty$. 

More precisely, if $V_0$ is any component of any $B_{\xi_i}$, then there exist finitely many components $V_1\dots V_k$ of $\bigcup_{i=1}^d {B_{\xi_i}}$ and 
there exists a curve $\gamma:[0,1]\to\Cc$ such that $\gamma(0)\in V_0$, $\gamma(1)=\infty$ and for every $t\in[0,1]$, there exists $m\in\{0,1,\dots, k\}$ such that $\gamma(t)\in \ol{V}_m$, and so that $\gamma(t)\in\bigcup_{m=0}^k V_m$ for all $t$, except for finitely many $t$ for which $\gamma(t)$ are poles or prepoles. 
\end{Theorem}

Theorem \ref{Thm_PreimagesConnected} allows us to describe how the components of the basins are connected to each other. Thus, it is a basis for a combinatorial classification of those Newton maps where all critical points eventually land in immediate basins or on their common boundary point $\infty$: Theorems \ref{Thm_NewtonGraph} and \ref{Thm_Realization} (given below) show that the combinatorics of these connections suffice to describe such Newton maps uniquely (subject to the condition that all critical orbits are finite, which can be removed by a standard surgery).

\looseness-1
We call a Newton map \emph{postcritically fixed} if all its critical points land on fixed points after finitely many iterations (recall that fixed points are the roots of $p$ as well as $\infty$); this is much more restrictive than \emph{postcritically finite} maps (on which we comment below). If $f$ is a postcritically fixed Newton map, Theorem \ref{Thm_PreimagesConnected} makes it possible to structure the entire Fatou set because each Fatou component is in the basin of some superattracting fixed point.

Our combinatorial description starts with the channel diagram $\Delta$ of $f$: it is defined as the union of the accesses from the finite fixed points of $f$ to $\infty$ (see Section \ref{Sec_Models}). Denote by $\Delta_n$ the connected component of $f^{-n}(\Delta)$ containing $\infty$. It is a consequence of Theorem~\ref{Thm_PreimagesConnected} that for sufficiently large $n$, the connected graph $\Delta_n$ contains all critical points that are prepoles or are contained in the basins, as well as their forward orbits ---  similar to the Hubbard tree of a postcritically finite polynomial; in the postcritically \emph{fixed} case, these are all critical points. If $n$ is minimal so that $\Delta_n$ contains all critical points, then $\Delta_{n+1}$ will be the \emph{Newton graph associated to $f$}.

We introduce the notion of an \emph{abstract Newton graph}, which is a pair $(\Gamma,g)$ of a map $g$ acting on a finite graph $\Gamma$ that satisfies certain conditions; see Definition \ref{Def_NewtonGraph}. In particular, the conditions on $(\Gamma,g)$ allow $g$ to be extended to a branched covering $\overline{g}$ of the whole sphere $\S^2$.

We show that for every {abstract Newton graph} $(\Gamma,g)$ there exists a postcritically fixed Newton map for which $(\Gamma,g)$ is the associated Newton graph; we say that $f$ \emph{realizes} the abstract Newton graph $(\Gamma,g)$. In this case, the map $f$ is unique up to affine conjugacy. 

The assignments of a Newton map to an abstract Newton graph and vice versa are injective and inverse to each other, so a combinatorial classification of postcritically fixed Newton maps is provided in terms of abstract Newton graphs. Thus, our main results are the following (see Sections \ref{Sec_Graph} and \ref{Sec_Thurston} for the precise definitions).

\begin{Theorem}[Newton map generates Newton graph]
\label{Thm_NewtonGraph} 
For every postcritically fixed Newton map $f$ there exists a unique $N\in\N$ such that $(\Delta_N,f)$ is an abstract Newton graph.

If $f_1$ and $f_2$ are Newton maps with channel diagrams $\Delta_1$ and $\Delta_2$ such that $(\Delta_{1,N},f_1)$ and $(\Delta_{2,N},f_2)$ are equivalent as abstract Newton graphs, then $f_1$ and $f_2$ are affinely conjugate, and conversely.
\end{Theorem}

\begin{Theorem}[Newton graph generates Newton map]
\label{Thm_Realization}
Every abstract Newton graph is realized by a postcritically fixed Newton map, unique up to affine conjugacy, in the following sense: 
if $(\Gamma,g)$ is an abstract Newton graph then there exists a postcritically fixed Newton map $f$ for which the associated Newton graph is (equivalent to) $(\Gamma,g)$. 

Moreover, if $f$ realizes two abstract Newton graphs $(\Gamma_1,g_1)$ and $(\Gamma_2,g_2)$, then the two abstract Newton graphs are equivalent.
\end{Theorem}

Our construction of an abstract Newton graph can be done for all postcritically finite Newton maps, but will in general not contain the orbits of \emph{all} critical points, and thus not describe the combinatorics of the entire Fatou set: for instance, when there are periodic critical points, their associated Fatou components (and thus the dynamics of the critical orbits contained therein) are not described by the Newton graph $\Delta_n$. 

The classification of postcritically \emph{finite} Newton maps in \cite{LMS1,LMS2} is based on ``extended Newton graphs'': these are our Newton graphs extended by finitely many Hubbard trees that describe the dynamics of those critical points that do not land on fixed points; these Hubbard trees need to be connected to the Newton graph by curves that we call Newton rays (and this connection is the most difficult part).

Thus, our results are a first step, and provide necessary tools, towards a combinatorial classification of all postcritically finite Newton maps, and hence of all hyperbolic components in the space of Newton maps. They may also be a basis for transporting the powerful concept of Yoccoz puzzles, which has been used to prove local connectivity of the Julia set for many classes of polynomials, to the setting of Newton maps beyond the cubic case (Roesch has successfully applied Yoccoz puzzles to cubic Newton maps \cite{Roesch}). Moreover, they provide a fundamental ingredient in a natural decomposition of arbitrary rational maps into \emph{Newton-like} maps and maps of Sierpi\'nski type \cite{DecompositionDimaMisha}.

A number of people have studied Newton maps and used combinatorial models to structure the parameter spaces of some Newton maps. Janet Head \cite{Head} introduced the \emph{Newton tree} to characterize postcritically finite cubic Newton maps. Tan Lei \cite{TanLei}  built upon this work and gave a classification of postcritically finite cubic Newton maps in terms of matings and captures.  Jiaqi Luo \cite{Luo} extended some of these results to \emph{unicritical} Newton maps, i.e., Newton maps of arbitrary degree with only one \emph{free} (non-fixed) critical value.
The present work can be seen as an extension of these results beyond the setting of a single free critical value, and thus beyond the setting where all maps come from complex one-dimensional parameter spaces. The main differences to the case of only one free critical points are that the channel diagram can have more than one branch point in $\C$ and that in the presence of more than one non-fixed critical value, the iterated preimages of the channel diagram may be disconnected (to deal with this problem is our main concern in Sections~\ref{Sec_Models} and \ref{Sec_Proof}).

\smallskip
This article is structured as follows. In Section \ref{Sec_Models}, we introduce the concept of a channel diagram for Newton maps and discuss some of its properties, notably with the help of a useful fixed point theorem. We use the channel diagram and its preimages to prove Theorem \ref{Thm_PreimagesConnected} in Section \ref{Sec_Proof}. In Section \ref{Sec_Graph}, we introduce abstract Newton graphs and prove Theorem \ref{Thm_NewtonGraph}. Theorem \ref{Thm_Realization} is proved in Section \ref{Sec_Thurston}, following a review of fundamental aspects of Thurston theory. We also give an introduction to the combinatorics of \emph{arc systems} and state a result by Kevin Pilgrim and Tan Lei that restricts the possibilities of how arc systems and Thurston obstructions can intersect.

\subsection{Notation}

Let $f$ be a Newton map. A point $z \in \C$ is called a \emph{pole} if $f(z)=\infty$ and a \emph{prepole} if $f^{\circ k}(z)=\infty$ for some $k> 1$. 
If $g:\Cc \to \Cc$ is a branched covering map, we call a point $z \in \Cc$ a \emph{critical point of $g$} if $g$ is not injective in any neighborhood of $z$. For the Newton map $f$, this is equivalent to saying that $z\in\C$ and $f'(z)=0$ because $\infty$ is never a critical point of $f$. It follows from the Riemann-Hurwitz formula \cite[Theorem 7.2]{Milnor} that a degree $d$ branched covering map of $\Cc$ has exactly $2d-2$ critical points, counting multiplicities. A \emph{closed topological disk} will be a subset of $\hat \C$ homeomorphic to $\ovl \disk$ (with Jordan boundary), and we set $D_r(a):=\{z\in\C\colon |z-a|<r\}$.

\begin{definition}[Postcritically fixed]
\label{Def_PostCritFixed}
Let $g:\Cc \to \Cc$ be a branched covering map of degree $d \geqslant 2$ with (not necessarily distinct) critical points $c_1,\dots,c_{2d-2}$. 
Then $g$ is called \emph{postcritically finite} if the set
\[
P_g := \bigcup_{i} \bigcup_{n \geqslant 1} g^{\circ n}(c_i)
\]
is finite.
We say that $g$ is \emph{postcritically fixed} if there exists $N\in\N$ such that for each $i\in\{1,\dots,2d-2\}$ the point $g^{\circ N}(c_i)$ is a fixed point of $g$.
\end{definition}

\begin{definition}[Access to $\infty$]
\label{Def_Access}
Let $U_{\xi}$ be the immediate basin of the attracting fixed point $\xi \in \C$. Consider an injective curve $\Gamma:[0,1]\to U_{\xi}\cup\{\infty\}$ with $\Gamma(0) =\xi$ and $\Gamma(1)=\infty$. Its homotopy class within $U_{\xi}\cup\{\infty\}$, fixing endpoints, defines an \emph{access to $\infty$} for $U_{\xi}$; in other words, a curve $\Gamma'$ with the same properties lies in the same access as $\Gamma$ if the two curves are homotopic in $U_{\xi}\cup\{\infty\}$ with the endpoints fixed.

\begin{remark}
In topologically simple cases, a simpler definition suffices. For instance for a finite graph $\Gamma$ embedded in the sphere, an access to a vertex $x\in \Gamma$ can simply be defined in terms of a (sufficiently small) disk $D$ around $x$: an access is then represented by a component of $D\sm\Gamma$ that contains $x$ on the boundary. We use this point of view in later sections. 
\end{remark}

\end{definition}
\begin{proposition}[Accesses; \protect{\cite[Prop.~6]{HSS}}]
\label{Prop_Access}
Let $f$ be a Newton map of degree $d\geqslant 2$ and $U_\xi$ an immediate basin for $f$. Then there exists $k\in \{1,\dots,d-1\}$ such that $U_\xi$ contains $k$ critical points of $f$ (counting multiplicities), $f|_{U_\xi}$ is a branched covering map of degree $k+1$, and $U_\xi$ has exactly $k$ accesses to $\infty$.
\qed
\end{proposition}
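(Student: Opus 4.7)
The plan is to transfer the dynamics of $f|_{U_\xi}$ to the unit disk via a Riemann map and analyze the resulting finite Blaschke product. Since $U_\xi$ is simply connected by Przytycki's theorem, fix a Riemann map $\phi\colon \disk \to U_\xi$ with $\phi(0)=\xi$. Because $\xi$ is an attracting fixed point in the forward-invariant component $U_\xi$, the restriction $f|_{U_\xi}\colon U_\xi\to U_\xi$ is proper, so $B:=\phi^{-1}\circ f\circ\phi$ is a finite Blaschke product of some degree $n\geq 1$ fixing $0$.

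The first step is to rule out $n=1$ using the Schwarz lemma: a degree-one Blaschke product fixing $0$ is a rotation, which would force $|f'(\xi)|=|B'(0)|=1$, contradicting $f'(\xi)=(m-1)/m<1$. Hence $n\geq 2$, and setting $k:=n-1\geq 1$, the Riemann--Hurwitz formula applied to the degree-$n$ branched cover $f|_{U_\xi}\colon U_\xi \to U_\xi$ between topological disks yields total ramification $n-1=k$, so $U_\xi$ contains exactly $k$ critical points counted with multiplicity. Since $f$ has global degree $d$, the local degree $n=k+1$ satisfies $n\leq d$, giving $k\leq d-1$.

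The heart of the argument is the access count. A finite Blaschke product $B$ of degree $k+1$ with $B(0)=0$ extends analytically across $\partial\disk$ and restricts there to a degree-$(k+1)$ orientation-preserving covering of $S^1$ having exactly $k$ fixed points, each repelling: the open disk and its exterior are the attracting basins of $B$ at $0$ and $\infty$ respectively, so $\partial\disk$ lies in the Julia set of $B$ viewed as a rational map on $\Cc$. One then sets up a bijection between these boundary fixed points of $B$ and accesses of $U_\xi$ to $\infty$. From an access to a boundary point: given a representative curve $\Gamma$ from $\xi$ to $\infty$, the curve $\phi^{-1}\circ\Gamma$ terminates at a point of $\partial\disk$ (via prime ends), and this terminus is fixed by $B$ because $f$ fixes each access individually---the latter uses that $\infty$ is a repelling fixed point of $f$ with positive real multiplier $d/(d-1)$, obtained by expanding $f(z)=z-p(z)/p'(z)$ at $\infty$, so in a K\"onigs linearizing coordinate near $\infty$ the map $f$ cannot permute accesses nontrivially. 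Conversely, each repelling boundary fixed point $w$ of $B$ admits by local linearization a continuous injective extension of $\phi$ to a neighborhood of $w$ in $\ol{\disk}$; the landing point $\phi(w)\in\partial U_\xi$ is a fixed point of $f$, which must be $\infty$ since every other finite fixed point sits inside its own disjoint immediate basin.

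The main technical obstacle is the bijection step: matching repelling boundary fixed points of $B$ with accesses via prime-end theory, and checking that $f$ genuinely fixes each access individually rather than merely permuting them cyclically. Once this correspondence is established, the count of exactly $k$ accesses follows immediately from the Blaschke-product fixed-point count.
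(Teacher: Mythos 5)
The paper gives no proof of this proposition (it is cited to \cite{HSS}), so there is no internal argument to compare against; your Blaschke-product route is the standard one from that reference, and the preliminary steps (properness of $f|_{U_\xi}$, the Schwarz-lemma exclusion of degree one, Riemann--Hurwitz, the count of $k$ repelling fixed points of $\hat{B}$ on $\partial\disk$) are all fine.

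The bijection step, which you yourself flag as the main obstacle, contains two genuine gaps. First, the claim that each repelling boundary fixed point $w$ of $B$ ``admits by local linearization a continuous injective extension of $\phi$ to a neighborhood of $w$ in $\ol{\disk}$'' is unjustified and false in general: a Riemann map extends continuously to an arc of $\S^1$ only when the corresponding portion of $\partial U_\xi$ is locally connected, which is not known a priori here. What you actually need is only that the $f$-invariant curve $\phi(R_w)$ (the image of the fixed internal ray at $w$) lands at $\infty$. This follows because its accumulation set on $\partial U_\xi$ is a connected $f$-invariant compactum that must contain a fixed point of $f$ --- necessarily $\infty$, since the finite fixed points lie interior to their own immediate basins --- and one then shrinks the accumulation set to $\{\infty\}$ using the Koenigs linearization of $f$ at $\infty$. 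Second, the assertion that a real positive multiplier at $\infty$ by itself forces $f$ to fix each access individually is not a proof as stated: in Koenigs coordinates the image of $U_\xi$ is invariant under $\zeta\mapsto\lambda\zeta$ but not under fractional powers $\lambda^{s}$, so the naive homotopy from a landing curve to its $f$-image need not stay inside $U_\xi$, and spiraling accesses that multiplication by $\lambda$ permutes are not excluded by this reasoning alone. The cleaner route is indirect: once the $k$ fixed rays land at $\infty$ they yield $k$ accesses fixed by $f$; since $f$ acts on the finite set of accesses as an orientation-preserving cyclic rotation and a rotation with a fixed point is the identity, all accesses are fixed, each corresponds to a fixed point of $B$ on $\S^1$, and the count is exactly $k$. (A minor slip: the multiplier at $\infty$ is $\deg p/(\deg p - 1)$, not $d/(d-1)$, since $p$ may have multiple roots; it is still real and greater than $1$, which is all you use.)
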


\medskip
\emph{Acknowledgements}.
We thank Tan Lei and the dynamics group in Bremen, especially Russell Lodge, for their comments that helped to
improve this paper. Moreover, we appreciate very useful suggestions by the referees on earlier versions of this paper.

We gratefully acknowledge support by the Deutsche Forschungsgemeinschaft DFG (YM) and the European Research Council ERC (KD, DS).

\section{The Channel Diagram and the Structure of the Immediate Basins and their Complements}
\label{Sec_Models}

In the following, by a \emph{graph} we mean a connected topological space $\Gamma$ homeomorphic to the quotient of a finite disjoint union of closed arcs by an equivalence relation on the set of their endpoints; our graphs are finite. The arcs are called \emph{edges} of the graph, an equivalence class of endpoints a \emph{vertex}.

Further we will understand the closure and boundary operators performed with respect to the topology of the Riemann sphere $\Cc$ unless otherwise stated. Also, we will say that a set $X\subset\Cc$ is \emph{bounded} if $\infty\not\in\ol{X}$.

We say that a Newton map $f$ of degree $d$ is \emph{attracting-critically-finite } if it has the following property:
\begin{equation} 
\tag{$\star$} 
\label{Prop_PCF}
\left\{
\begin{aligned}
& \text{if } c \text{ is a critical point of } f \text{ with } c\in B_{\xi_j} \text{ for some } \\
& j\in\{1,\dots,d\}, \text{ then } c \text{ has finite orbit;}
 \end{aligned}
 \right.
\end{equation}
in other words, all critical points in the basins of the roots have finite orbits, or equivalently, all attracting fixed points are superattracting and all critical orbits in their basins eventually terminate at the fixed points. 

The following observation is well known, and its proof is standard and omitted.
\begin{lemma}[Only critical point]
\label{Lem_OnlyCritical}

Let $f$ be a Newton map that is attracting-critically-finite and let
$\xi\in\C$ be a fixed point of $f$ with immediate basin $U_{\xi}$.
Then $\xi$ is the only critical point in $U_{\xi}$. \qed
\end{lemma}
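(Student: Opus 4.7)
I argue by contradiction: suppose there is a critical point $c \in U_\xi$ with $c \neq \xi$. First, the forward orbit of $c$ must terminate at $\xi$ in finitely many steps. Indeed $c \in U_\xi \subseteq B_\xi$, so $f^j(c) \to \xi$, while \eqref{Prop_PCF} forces this orbit to be a finite set, and a finite sequence converging to $\xi$ must actually contain $\xi$. I may therefore pick the smallest $n \geq 1$ with $f^n(c) = \xi$, so that $w := f^{n-1}(c) \in U_\xi \setminus \{\xi\}$ is a preimage of $\xi$ in $U_\xi$ distinct from $\xi$ itself.

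Next, I pass to a Blaschke model. By Proposition~\ref{Prop_Access}, $f|_{U_\xi}$ is a proper holomorphic self-map of $U_\xi$ of degree $d' = k+1 \geq 2$ fixing $\xi$. Uniformizing $U_\xi \to \disk$ with $\xi \mapsto 0$ conjugates $f|_{U_\xi}$ to a finite Blaschke product $B:\disk\to\disk$ of degree $d'$ with $B(0)=0$ attracting. The existence of $w$ means that $0$ has a preimage in $\disk \setminus \{0\}$ under $B$, while property \eqref{Prop_PCF} translates into postcritical finiteness of $B$: the postcritical set of $B$ in $\disk$ is a finite subset of iterated preimages of $0$.

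The desired contradiction is that such a $B$ cannot exist: a postcritically finite degree-$d'$ finite Blaschke product of $\disk$ with attracting fixed point at $0$ is M\"obius-conjugate to $z \mapsto z^{d'}$, and in that model $0$ is the only preimage of $0$. Pulling back via the uniformization, $\xi$ is then the unique critical point and unique preimage of $\xi$ in $U_\xi$, contradicting the existence of both $c$ and $w$.

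I expect the rigidity step to be the main obstacle, and I would prove it by iteratively extending the B\"ottcher/Koenigs coordinate of $B$ at $0$ by pullback along the finite postcritical set: the local conjugacy of $B$ to $z\mapsto z^{r_0}$ near $0$ extends to the maximal simply connected subdomain of $\disk$ containing $0$ on which the extension remains single-valued, and postcritical finiteness ensures that this pullback procedure meets no new branching and fills out all of $\disk$, producing the claimed global conjugacy.
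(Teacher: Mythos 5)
The paper gives no proof for this lemma (it says ``we omit the easy proof of the following well-known fact''), so there is no author argument to compare against; I can only assess your plan on its own merits. Your overall strategy---reduce to a Blaschke product $B$ on $\disk$, argue that $B$ is conjugate to $z\mapsto z^{d'}$, and deduce that $0$ is the unique preimage of $0$---is a sound route, and the opening reduction (a finite orbit converging to $\xi$ must contain $\xi$) is exactly right. But the ``rigidity step'' you flag as the main obstacle is where the plan currently has real gaps.

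First, the paper's Definition~\ref{Def_NewtonMap} allows $f'(\xi)=(m-1)/m$ with $m>1$, so a priori $\xi$ may be geometrically attracting, in which case $\xi$ is not a critical point, $B'(0)\neq 0$, and there is no B\"ottcher coordinate at all; a M\"obius conjugacy cannot send $B$ to $z^{d'}$ because it preserves the multiplier at the fixed point. You invoke ``B\"ottcher/Koenigs'' but never resolve the dichotomy. This case has to be ruled out first; one clean way is to observe that in the Koenigs model the maximal linearizing disk $V$ around $\xi$ has a critical point $c\in U_\xi$ on its boundary (Milnor, Thm.~8.5), and the critical value $f(c)\in\ol{f(V)}\subset V$ satisfies $|\psi(f^n(c))|=|\lambda|^{n-1}|\psi(f(c))|>0$ for all $n$, so the orbit of $c$ never reaches $\xi$, contradicting~\eqref{Prop_PCF}.

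Second, in the superattracting case your argument ``extend the B\"ottcher coordinate by pullback; postcritical finiteness ensures this meets no new branching'' is not correct as stated. By Milnor's Theorem 9.3 the B\"ottcher coordinate has a maximal domain $U\subsetneq U_\xi$ whose boundary contains another critical point $c$, and it simply does not extend past $\partial U$; postcritical finiteness does not remove that branching. What does work, and is presumably the ``well-known'' argument, is to use the maximal domain directly: write $\phi:\disk_r\to U$ for the B\"ottcher chart with $k_0=\deg_\xi f\geq 2$; since $\phi(\disk_{r^{k_0}})\Subset U$, the critical value $f(c)\in\ol{\phi(\disk_{r^{k_0}})}$ has B\"ottcher modulus exactly $r^{k_0}\in(0,r)$, so $f^{\circ n}(f(c))$ has modulus $r^{k_0^{\,n+1}}>0$ for every $n$ and the orbit of $c$ never reaches $\xi$. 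This again contradicts~\eqref{Prop_PCF}. Once both cases are handled this way, there is no need to quote a stand-alone ``PCF Blaschke products are power maps'' rigidity theorem---the contradiction falls out of the maximal linearizing/B\"ottcher domain directly.
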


By a standard construction of quasiconformal surgery, every Newton map can be turned into an attracting-critically-finite Newton map of the same degree so that the restriction of both maps to the Julia set is topologically conjugate, and it will preserve the statement in Theorem~\ref{Thm_PreimagesConnected}. For the most part in Sections~\ref{Sec_Models} and \ref{Sec_Proof}, we will thus be able to work with attracting-critically-finite maps, and relate this to the general case at the end of Section~\ref{Sec_Proof}. Postcritically fixed (and postcritically finite) maps as discussed in Sections~\ref{Sec_Graph} and \ref{Sec_Thurston} satisfy this condition anyway. 
 
Focusing now on attracting-critically-finite Newton maps, for every immediate basin $U_\xi$ there is an inverse Riemann map $\phi_\xi\colon\disk\to U_\xi$ with $\phi_\xi(0)=\xi$ and with the
property that $f(\phi_\xi(z)) = \phi_\xi(z^{k_\xi})$ for each $z\in
\disk$, where $k_\xi-1\geqslant 1$ is the multiplicity of $\xi$ as a
critical point of $f$ \cite[Theorems 9.1 and 9.3]{Milnor}; this map $\phi_\xi$ is also known as B\"ottcher map. An \emph{internal ray} is the image under $\phi_\xi$ of a radial line in $\disk$, so that every internal ray in $U_\xi$ maps under $f$ to another internal ray. In particular, there are $k_\xi-1$ fixed internal rays. These give $k_\xi-1$ invariant curves $\Gamma_{\xi}^1,\dots,\Gamma_{\xi}^{k_\xi-1}$ in $U_{\xi}$ that connect $\xi$ to $\infty$; they are disjoint and have disjoint closured except for at $\xi$ and $\infty$. These invariant curves represent different accesses to $\infty$ of $U_\xi$, and every such access is represented by one of them (Proposition \ref{Prop_Access}).  Hence if $\xi_1,\dots,\xi_d\in\C$ are the attracting fixed points of $f$, then the union
\begin{equation}
    \Delta := \bigcup_{i=1}^d\bigcup_{j=1}^{k_{\xi_i}-1} \ol{\Gamma_{\xi_i}^j}
\label{Eq:DefChannelDiagram}
\end{equation}
of these invariant curves over all immediate basins forms a
connected and $f$-invariant graph in $\Cc$ with vertices at the
$\xi_i$ and at $\infty$.  We call $\Delta$ the \emph{channel diagram}
of $f$. 

The channel diagram records the mutual locations of the
immediate basins of $f$ and provides a first-level combinatorial
structure to the dynamical plane. Figure \ref{Fig_Degree6} shows a
Newton map and its channel diagram. In Definition~\ref{Def_ChannelDiagram} we will give an axiomatic characterization of channel diagrams.

\begin{figure}[hbt]
\begin{center}
\setlength{\unitlength}{1cm}
\begin{picture}(10,10)
{
\includegraphics[width=0.8\textwidth]{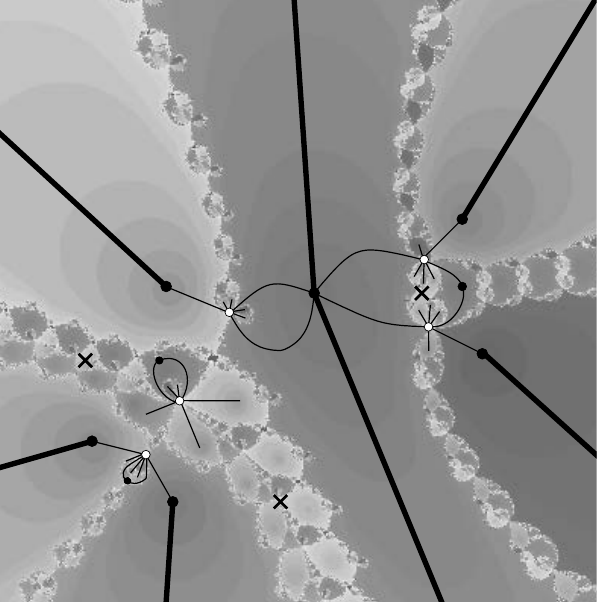}
}
\end{picture}
\caption{\label{Fig_Degree6} A Newton map of degree 6, superimposed with its channel diagram: the thick black lines represent the fixed rays of the immediate basins, the large black dots correspond to the fixed points (the fixed point at $\infty$ is not visible).
The thick and thin black lines together show the first preimage of the channel diagram: white circles represent poles, small black dots indicate preimages of the central fixed point, and a cross is a ``free'' critical point. The Newton map has a pole outside $\Delta_1$ (visible in the lower left), and the right boundary component of the central immediate basin contains two poles.}
\end{center}
\end{figure}

The following theorem is the main result of this section; it shows a relation between poles and fixed
points outside immediate basins. It considerably sharpens
\cite[Corollary 5.2]{RS}, which states that for an immediate basin
$U_{\xi}$ of a Newton map, every component of $\C\setminus U_{\xi}$
contains at least one fixed point (i.e.\ a root and hence an immediate basin).

\begin{theorem}[Fixed points and poles]
\label{Thm_FixedPoles}
Let $f$ be a Newton map and $U_\xi$ an immediate basin. If $W$ is a component of $\C\setminus U_\xi$, then the number of fixed points in $W$ equals the number of poles in $W$, counting multiplicities.
\end{theorem}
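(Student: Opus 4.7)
The plan is to apply the argument principle on $\Cc$ to the rescaled meromorphic function
\[
h(z) \;:=\; \frac{z - f(z)}{z - \xi}.
\]
Because $f'(\xi) = (m-1)/m \neq 1$, the simple zero of $z - f(z)$ at $\xi$ cancels the simple zero of $z - \xi$, so $h$ extends holomorphically across $\xi$ with $h(\xi) = 1/m \neq 0$. A short expansion at $\infty$ gives $h(\infty) = 1/d$, again finite and nonzero. Hence on all of $\Cc$ the zeros of $h$ are precisely the $d-1$ fixed points of $f$ other than $\xi$ (all simple), and the poles of $h$ are precisely the $d-1$ poles of $f$, counted with multiplicity. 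Crucially, since $\xi$ is the only fixed point of $f$ in $U_\xi$ and $U_\xi$ contains no pole of $f$ (being forward-invariant and disjoint from $\infty$), $h$ is holomorphic and nowhere vanishing on $U_\xi$.

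Next, I construct a Jordan curve $\gamma \subset U_\xi$ that, viewed in $\Cc$, separates $\ol{W}$ from $\xi$ and from every other component $W' \neq W$ of $\C \setminus U_\xi$; let $D \subset \Cc$ be the closure of the component of $\Cc \setminus \gamma$ that contains $\ol{W}$. This is a topological construction: using a Riemann map $\phi : \disk \to U_\xi$, the prime ends of $U_\xi$ belonging to $\ol{W}$ form a closed sub-arc $A_W \subset \partial \disk$, and one takes $\gamma$ as the image under $\phi$ of a Jordan curve in $\disk$ that tightly hugs $A_W$. Whether $W$ is bounded in $\C$ or $\infty \in \ol{W}$, the set $D$ is simply the component of $\Cc \setminus \gamma$ that contains $\ol W$; both cases are handled uniformly below thanks to the holomorphy of $h$ at $\infty$.

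Since $U_\xi$ is simply connected and $h$ is holomorphic and nowhere vanishing there, $\log h$ admits a single-valued branch on $U_\xi$, so $\oint_\gamma d\log h = 0$. Orienting $\gamma$ as the positive boundary of $D$ on $\Cc$ and applying the argument principle, we obtain $N_h(D) = P_h(D)$. By the description of zeros and poles of $h$ above together with the construction of $D$, the zeros of $h$ in $D$ are precisely the fixed points of $f$ in $W$ (neither $\xi$ nor $\infty$ is a zero of $h$, and every other fixed point of $f$ lies in its own immediate basin, hence in some component $W' \neq W$ of $\C \setminus U_\xi$, which lies outside $D$), while the poles of $h$ in $D$ are precisely the poles of $f$ in $W$, counted with multiplicity. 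The claimed equality follows.

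The main technical obstacle is the topological construction of $\gamma$: one must isolate a single component $W$ of $\C \setminus U_\xi$ from the possibly many, possibly accumulating other components by a curve that stays inside $U_\xi$. If $\partial U_\xi$ is locally connected — as is automatic when $f$ satisfies \eqref{Prop_PCF} — this is immediate from Carathéodory's theorem; in the full generality of the theorem statement (no assumption such as \eqref{Prop_PCF}), one argues more carefully that such a separation is always available.
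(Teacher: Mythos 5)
Your bookkeeping on the zeros and poles of $h(z)=(z-f(z))/(z-\xi)$ is correct, and this function is indeed closely related to the fixed-point count in the paper's Lemma \ref{Lem_Lefschetz}. But the proof has a genuine gap in the two steps you flag as ``the main technical obstacle,'' and the obstacle is not local connectivity of $\partial U_\xi$ --- it is the topology of $\Cc\setminus U_\xi$ near $\infty$.

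First, a Jordan curve $\gamma\subset U_\xi$ can never separate $W$ from the other components $W'$ of $\C\setminus U_\xi$. Since $U_\xi$ is simply connected as a subset of $\Cc$, one of the two complementary disks of $\gamma$ lies entirely inside $U_\xi$, so the \emph{other} disk contains all of $\Cc\setminus U_\xi$. Equivalently: every component $W$ of $\C\setminus U_\xi$ contains a fixed point $\eta\neq\xi$ and hence its unbounded immediate basin $U_\eta$, so $\infty\in\ol{W}$ for \emph{every} such $W$, and $\Cc\setminus U_\xi$ is connected. Thus with $\gamma\subset U_\xi$, your $D$ necessarily contains all $W'$ and $\infty$, and the argument-principle count reduces to the trivial $d-1=d-1$; it says nothing about an individual $W$. (This is exactly why the paper's curve is $\Gamma=\phi(\gamma)\cup\{\infty\}\subset\ol{U_\xi}$, a Jordan curve \emph{through} $\infty$.)

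Second, if you repair this by letting $\gamma$ pass through $\infty$, then the claim $\oint_\gamma d\log h=0$ is no longer justified by simple connectivity of $U_\xi$, because $\gamma\not\subset U_\xi$. The single-valued branch of $\log h$ on $U_\xi$, continued along the two ends of $\gamma$ into the two distinct accesses of $U_\xi$ at $\infty$, may disagree by a nonzero multiple of $2\pi i$: the union of $U_\xi$ with a small disk around $\infty$ is \emph{not} simply connected when $U_\xi$ has more than one access, precisely because the components $W,W'$ puncture it. In fact, for such a $\gamma$ (oriented as the boundary of $D$) the quantity $\tfrac{1}{2\pi i}\oint_\gamma d\log h$ \emph{is} the difference (number of fixed points in $W$) $-$ (number of poles in $W$); asserting it vanishes is the entire content of the theorem, not a formality. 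This is where the paper does real work: it passes to $\Gamma'_1\subset f^{-1}(\Gamma)$, invokes the Lefschetz-type count of Lemma \ref{Lem_Lefschetz}, and then handles a nontrivial case distinction ($p_1=p_2$ versus $p_1\neq p_2$) to rule out extra fixed points or poles in the ``slack'' region $X=W\setminus Y'$. None of that is captured by the $\log h$ argument as written.

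So the approach via $h$ is a reasonable starting point --- it is essentially the argument principle for $z-f(z)$ in disguise, which is also what underlies Lemma \ref{Lem_Lefschetz} --- but the step ``the integral is zero because $\log h$ is single-valued'' either proves the trivial global identity (if $\gamma\subset U_\xi$) or is unproved (if $\gamma$ passes through $\infty$). The missing content is exactly what the paper's dynamical argument (pulling $\Gamma$ back under $f$ and counting) supplies.
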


Let us first give a rough outline of the proof idea before getting involved in somewhat technical constructions. Suppose for simplicity that $\ovl W=W\cup\{\infty\}$ is a closed topological disk so that $\partial W=:\Gamma$ is a circle and $f\colon\Gamma\to\Gamma$ is a covering map of degree $k\ge 2$. First consider the special case that $f(\ovl W)=\ovl W$; then $f$ is a branched cover of degree $k$ that has $k$ fixed points on $\ovl W$ and every point, in particular $\infty$, has $k$ preimages, so there are $k-1$ poles in $W$ (all on $\partial W\sm\{\infty\}$); since one of the fixed points is $\infty$, the set $W$ contains $k-1$ fixed points and the claim holds. 

Usually, however, $f(\ovl W)=\hat\C$, so $\ovl W\sm f^{-1}(\Gamma)$ consists of finitely many components, each of which either maps to $\mathring W$ or to $\C\sm W$. If $W_0$ is such a component that maps to $\mathring W$, then it contributes fixed points according to its mapping degree, and it has the same number of poles on the boundary. A component that maps to $\C\sm W$ does not have fixed points, and poles are on the boundary where they are already counted for the components of the first kind. 

While the overall idea of this argument makes sense, its details do not: we do not know whether $\partial U_\xi$ is a curve, and in general it will not be a Jordan curve (if $U_\xi$ has two or more accesses, then every pole or prepole on $\partial U_\xi$ must locally disconnect $U_\xi$). 

To produce the precise proof we will construct a curve $\Gamma\subset U_\xi\cup\{\infty\}$ ``near'' $\partial U_\xi$, and for the counting of fixed points and poles we use the following  useful observation that is a special case of \cite[Theorem 4.8]{RS}. 

\begin{lemma}[Fixed points]
\label{Lem_Lefschetz}
Let $f$ be a Newton map and let $D\subset\Cc$ be a closed topological disk such that $\gamma:=f(\partial D)$ is a simple closed curve with the property that $\gamma\cap\mathring{D}=\emptyset$. Let $V$ be the unique component of\/ $\Cc\setminus\gamma$ that contains $\mathring{D}$ and let $(\gamma'_i)_{i\in I}$ be the collection of boundary components of $f^{-1}(V)\cap D$. 
Suppose in addition that any fixed point $p$ of\/ $f$ on $\partial D$ is repelling and has a neighborhood $U$ so that $f(\partial D\cap U)\subset\partial D$. 

Then the number of fixed points of $f$ in $D$ equals
\[
    \sum_{i\in I} \left|\deg(f|_{\gamma'_i}:\gamma'_i\to\gamma)\right|.
\]
In particular, if $f^{-1}(V)\cap D\neq\emptyset$, then $D$ contains a fixed point.
\qed
\end{lemma}
\begin{remark}
The general statement of \cite[Theorem 4.8]{RS} allows the existence of parabolic fixed points. Since these do not exist for $f$, we do not need to take multiplicities of fixed points into account (in the notation of \cite{RS}, all fixed point indices $\iota(\xi,f)=1$). 
\end{remark}

\begin{proof}[Proof of Theorem~\ref{Thm_FixedPoles}]
Let $d$ be the degree of $f$. If $U_\xi$ does not separate the plane, i.e., if it has only one access to $\infty$, then the claim follows trivially: $W$ contains all $d-1$ finite poles and the $d-1$ finite fixed points of $f$ other than $\xi$. So suppose in the following that there is an inverse Riemann map $\phi:(\disk,0)\to (U_\xi,\xi)$ with $f(\phi(z))=\phi(g(z))$
for some Blaschke product $g:\disk\to\disk$ of degree $k:=\deg(f|_{U_{\xi}}:U_{\xi}\to U_{\xi})\geqslant 3$.

\begin{figure}[hbt]
\begin{center}
\includegraphics[trim=0 10 0 10]{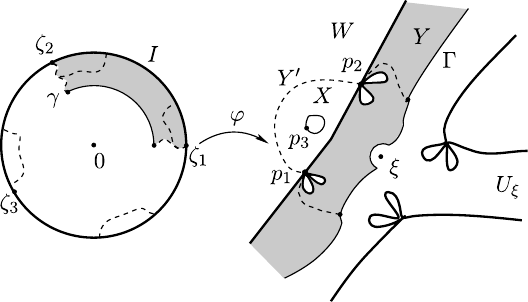}
\caption{\label{Fig_LefschetzCurve} In the proof of Theorem \ref{Thm_FixedPoles}, the construction of $\gamma\subset\disk$ is shown on the left for $k=4$. The dashed curves are the components of
$g^{-1}(\gamma)$. The picture on the right shows the curve $\Gamma\subset U_{\xi}$ for the case $p_1\neq p_2$. The dashed curve indicates where $\Gamma'_1$ differs from $\Gamma$.
}
\end{center}
\end{figure}

We may extend $g$ by Schwarz reflection to a rational function $\hat{g}:\Cc\to\Cc$ of degree $k$ whose Julia set equals $\S^1$ and that has $k-1$ fixed points $\zeta_1,\dots,\zeta_{k-1}\in \S^1$. These fixed points correspond to the accesses to $\infty$ of $U_{\xi}$. Since $\hat{g}$ fixes $\disk$, the $\zeta_i$ have real positive multipliers and since $0$ and $\infty$ attract all of $\disk$ and of
$\Cc\setminus\ol{\disk}$, respectively, none of the $\zeta_i$ can be attracting or parabolic, so they are pairwise distinct and repelling. For each $\zeta_i$ choose a linearizing neighborhood and choose $\rho\in(0,1)$ large enough so that all critical values of $\hat{g}$ in $\disk$ are in $D_\rho(0)$ and so that the linearizing neighborhoods of all $\zeta_i$ intersect the circle at radius $\rho$. 

Now we construct a curve $\gamma\subset\disk$ as follows (see Figure \ref{Fig_LefschetzCurve}): 
there is a unique pair of adjacent fixed points $\zeta_j,\zeta_{j+1}\in\S^1$ so that the corresponding accesses to $\infty$ in $U_\xi$ separate $W$ from all other components of $\C\setminus U_\xi$. Let $\gamma_j$ be the unique curve in $\disk$ that is a straight line segment in linearizing coordinates of $\zeta_j$ and that connects $\zeta_j$ to the circle at radius $\rho$; and there is a similar curve $\gamma_{j+1}$ near $\zeta_{j+1}$. Their endpoints cut the circle at radius $\rho$ into two arcs. Of those arcs, let $\gamma'$ be the one for which $\gamma:=\gamma_j\cup \ol{\gamma'}\cup \gamma_{j+1}$ has the property that $\phi(\gamma)\subset U_{\xi}$ separates $W$ from $\xi $. Let $\Gamma:=\phi(\gamma)\cup\{\infty\}$. Then $\Gamma$ is a simple closed curve in ${U}_{\xi}\cup\{\infty\}$ and contains no critical values, except possibly $\infty$, and it is uniquely specified by the construction.

Let $Y$ be the closure of the component of $\Cc\setminus \Gamma$ that contains $W$. Then $Y\supset W$, so $Y$ is a Jordan domain that replaces $W$ in the simple argument given before the proof. Since $Y\sm W\subset U_\xi\sm\{\xi\}$, the number of poles in $W$ and in $Y$ coincide, and also the number of fixed points.

Let us first suppose that $\infty$ is not a critical value. Then every component $\Gamma'_i$ of $f^{-1}(\Gamma)$ is a simple closed curve and $\deg(f|_{\Gamma'_i}:\Gamma'_i\to\Gamma)$ equals the number of poles on $\Gamma'_i$ (no need to count multiplicities). 

Let $\Gamma_1'$ be the component of $f^{-1}(\Gamma)$ containing $\infty$. We claim that $\Gamma'_1\cap U_{\xi}$ consists of two connected components, each of which is an injective curve that connects $\infty$ to a pole on $\partial U_{\xi}$; call these poles $p_1$ and $p_2$ (possibly $p_1=p_2$). Indeed, consider the situation in $\disk$-coordinates (see Figure~\ref{Fig_LefschetzCurve}, left). Let $I\subset\S^1$ be the arc between $\zeta_j$ and $\zeta_{j+1}$ that is separated from $0$ by $\gamma$; then $I$ and $\gamma$ are homotopic relative to the critical values of $\hat g$. Since $\mathring{I}$ contains no fixed points of $\hat{g}$, it follows that $\hat{g}(\ol{I})$ covers $\S^1\setminus \ol{I}$ exactly once and $\ol{I}$ itself exactly twice. Therefore, $\hat{g}^{-1}(\gamma)$ has exactly two connected components that intersect $\gamma$, as claimed.

Let $Y'$ be the closure of the component of $\Cc\setminus\Gamma'_1$ that intersects $W$ in an unbounded set.

If $p_1=p_2$, then $\Gamma'_1\subset {U}_{\xi}\cup\{p_1,\infty\}$, otherwise $\Gamma'_1\not\subset \ol{U}_{\xi}$ (this is the situation pictured in Figure \ref{Fig_LefschetzCurve}: $\Gamma'_1$ is a simple closed curve that connects $p_1$ to $p_2$ through a component of $f^{-1}(U_\xi)$ other than $U_\xi$ itself). If the set $X:=W\setminus Y'$ contains neither poles nor fixed points of $f$, then the numbers of fixed points and poles in $W$ and $Y'$ coincides. But since $Y'$ is a closed topological disk and $f(\partial Y')\cap \mathring{Y}'=\emptyset$, Lemma~\ref{Lem_Lefschetz} \Lefschetz
 shows that the number of fixed points in $Y'$ (including $\infty$) equals the number of poles in $Y'$ (again including $\infty$) because on every component of $f^{-1}(\Gamma)$ in $Y'$ the degree of $f$ equals the number of poles it contains. Excluding $\infty$ again, the claim follows.

We now show that, in fact, $X$ never contains a fixed point. First observe that $\Gamma$ and $\Gamma'_1$ coincide in a neighborhood of $\infty$, so these two curves together surround a bounded subset of $\C$ that contains $X$. Any fixed point in $X$ would have to be a root of $p$ and thus be surrounded by its immediate basin, which is unbounded and cannot intersect $\Gamma\cup\Gamma'_1$, a contradiction. 
Therefore, $W$ contains at least as many poles as fixed points. Since this is true for all components of $\C\setminus U_\xi$, and all of them combined contain all $d-1$ poles in $\C$ and all $d-1$ fixed points in $\C\setminus\{\xi\}$, the number of poles and fixed points in each component $W$ must be the same.

We still have to treat the case that $\infty$ is a critical value. In this case, we perturb $f$ slightly (among Newton maps) to avoid that situation. One problem is that $U_{\xi}$ and $W$ might move discontinuously, and $U_\xi$ might even acquire (but not lose) additional channels under small perturbations. However, we argued above that in any case, the numbers of fixed points and poles that we are interested in are the same in $W$ and in $Y$, and under small perturbations poles and fixed points of $f$ move continuously, and so do $\Gamma$ and hence $Y$ (even if $U_\xi$ suddenly acquires additional channels). Depending on the perturbation, the topology of $\Gamma'_1$ might change: it may no longer be a simple closed curve if it contains critical points, but it will be a finite graph that coincides with $\Gamma$ in a neighborhood of $\infty$. Any bounded subset of $\C$ that is bounded by parts of $\Gamma$ and $\Gamma'_1$ cannot contain fixed points before or after perturbation (by the same argument as above), and after perturbation the reasoning above applies as before. This shows that $Y$ still contains at least as many poles as fixed points, and since this is true for all components of $\C\sm U_\xi$, the conclusion follows in this case too.
\end{proof}

\begin{corollary}[Poles on boundary component]
\label{Cor_BoundaryContainsPole}
For every immediate basin $U_\xi$, every component of $\partial U_\xi\cap\C$  contains exactly one or two poles. 
\end{corollary}

\begin{proof}
The components of $\partial U_{\xi}\cap\C$ are separated by the accesses to $\infty$. In the conjugate dynamics $\hat{g}|_{\disk}$ the map $g$ is a Blaschke product of degree at least two, so $\hat{g}\colon \S^1\to \S^1$ is a covering map of degree $k\ge 2$. Fixed points of $\hat g$ correspond to accesses of $U_\xi$ to $\infty$, and between any two consecutive fixed points on $\S^1$ there must be pre-fixed points, and these correspond to poles of $f$. This proves the existence of at least one pole. (More precisely, between any two fixed points of $\hat g$ on $\S^1$ there are $k-1$ pre-fixed points, but they might and often do correspond to the same pole of $f$.) 

The proof that there are at most two poles is included in the proof of Theorem~\ref{Thm_FixedPoles}: if there are at least three poles, then the set $X$ in that proof must contain one of them, but we proved this is not the case.
\end{proof}

Figure \ref{Fig_Degree6} shows that a component of $\partial U_{\xi}\cap \C$ may indeed contain two poles.

Our next result is slightly easier to state in the attracting-critically-finite case, so we state it first in that case, followed by a more general version that implies the first. 

\begin{corollary}[Fixed points in complement, attracting-critically-finite]
\label{Cor_FixedComplement}
Let $f$ be an attracting-critically-finite Newton map and let $\Delta$ be the channel diagram of $f$. For a component $V$ of $\C\setminus\Delta$ let $p$ be the number of poles of $f$ in $V$, counting multiplicities. Then $\partial V \cap \C$ contains $p+1$ fixed points.
\end{corollary}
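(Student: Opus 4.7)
The plan is to derive the corollary from Theorem~\ref{Thm_FixedPoles} (Fixed Points and Poles) by a structural analysis of the face $V$, using induction on the number of distinct fixed points on $\partial V$.

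First I would make several preliminary observations about a face $V$ of $\Delta$. Every edge of $\Delta$ is a fixed internal ray from some attracting fixed point $\xi_i$ to $\infty$, so $\infty \in \partial V$ for every face $V$. Since every fixed point of $f$ in $\C$ is a vertex $\xi_i$ of $\Delta$ and $V \cap \Delta = \emptyset$, no fixed point lies in the open set $V$; the fixed points in $\partial V \cap \C$ are precisely the distinct finite vertices visited by the boundary walk of $V$. Write this count as $m$, so the corollary asserts $m = p + 1$. Moreover, $V \cap U_{\xi_i} \neq \emptyset$ if and only if $\xi_i \in \partial V$: any sector of $U_{\xi_i}$ contained in $V$ has $\xi_i$ on its closure, forcing $\xi_i \in \partial V$; conversely, if $\xi_i \in \partial V$ then $V$ contains a sector of $U_{\xi_i}$ at $\xi_i$. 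Consequently, when $\xi_j \notin \partial V$, the basin $U_{\xi_j}$ is contained in $\Cc \setminus \ol{V}$, and in particular all rays of $U_{\xi_j}$ lie in $\Cc \setminus \ol{V}$.

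I would then proceed by induction on $m$. For the base case $m = 1$, let $\xi$ be the unique finite fixed point on $\partial V$; by the previous observation, the only rays of $\Delta$ touching $\ol{V}$ are rays of $U_\xi$, and so the only part of $\partial V$ not lying in $\ol{U}_\xi$ is the point $\infty$. A component $W$ of $\C \setminus U_\xi$ meeting $V$ can only leave $V$ by crossing $\partial V$, which it can do only through $\infty$; the cyclic order of rays at $\infty$, together with the absence of any other $\xi_j$ on $\partial V$, shows that this cannot occur. Thus each such $W$ lies in $\ol{V}$, and since $V$ contains no fixed point, Theorem~\ref{Thm_FixedPoles} forces $W$ to contain no poles either. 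Summing over such components gives $p(V) = 0 = m - 1$. For the inductive step $m \geq 2$, I pick $\xi \in \partial V$ and again apply Theorem~\ref{Thm_FixedPoles} to $U_\xi$: each of the $m - 1$ remaining fixed points on $\partial V$ lies in some component $W$ of $\C \setminus U_\xi$, and Theorem~\ref{Thm_FixedPoles} pairs it with a pole in the same $W$. A planar argument using the embedding of $\Delta$ then shows that each such pole lies in $V$, giving $p(V) = m - 1$.

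The main obstacle will be the last step: verifying that each pole produced by Theorem~\ref{Thm_FixedPoles} from one of the $m - 1$ other fixed points on $\partial V$ actually lies inside $V$, rather than in an adjacent face of $\Delta$. Because a component $W$ of $\C \setminus U_\xi$ can extend beyond $V$ by wrapping around $\infty$, carefully tracking which fixed points and poles are confined to $V$ requires a detailed use of the cyclic orderings of rays at $\infty$ and at each $\xi_{i_k}$, and possibly a simultaneous application of Theorem~\ref{Thm_FixedPoles} to the other basins $U_{\xi_{i_k}}$ on $\partial V$ to ensure the combinatorial count balances face by face.
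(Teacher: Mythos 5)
Your proposal correctly identifies Theorem~\ref{Thm_FixedPoles} as the engine, and the reduction to ``$m = p+1$'' (where $m$ is the number of distinct finite fixed points on $\partial V$) is a useful reformulation. Your base case $m=1$ is essentially correct: a component $W$ of $\C\setminus U_\xi$ is a subset of $\C$, hence misses $\infty$, and since $\partial V\cap\C \subset U_\xi\cup\{\xi\}$ when $\xi$ is the only finite vertex of $\Delta$ on $\partial V$, any such $W$ meeting $V$ lies entirely in $V$; it then contains no fixed points, hence no poles, giving $p=0$.

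However, the proposal as written has a genuine gap in the ``inductive step,'' which you yourself flag. First, it is not actually an induction: the inductive hypothesis for smaller $m$ is never invoked. Second, and more seriously, the step ``Theorem~\ref{Thm_FixedPoles} pairs $\xi'$ with a pole in the same $W$, and a planar argument then shows that pole lies in $V$'' does not hold up. A single component $W$ of $\C\setminus U_\xi$ inside the wedge bounded by the two rays of $U_\xi$ on $\partial V$ typically contains many fixed points and poles -- not only $\xi'$ and not only points in $V$, but also everything across the edges of $\Delta$ that bound $V$ from the other basins $U_{\xi_{i_k}}$. Theorem~\ref{Thm_FixedPoles} gives only a global equality of counts in $W$; it neither pairs up individual poles with individual fixed points nor localizes them to $V$. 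This is precisely where the content of the proof lies.

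The paper's actual proof avoids these issues by a direct counting argument rather than induction: it takes the larger region $V_1 \supset V$ bounded by two fixed rays $R_1, R_2$ of one separating basin $U_{\xi_1}$ on $\partial V$, counts $m$ poles and $m$ fixed points in $V_1$ via Theorem~\ref{Thm_FixedPoles}, and then subtracts off the sub-regions $V_j^i$ carved out by the fixed rays of each other separating basin $U_{\xi_j}$ on $\partial V$. Each $V_j^i$ has equal numbers of poles and fixed points (again by Theorem~\ref{Thm_FixedPoles}), so these contributions cancel exactly, leaving $p$ poles matched with $p$ fixed points in $V_1\setminus\bigcup V_j^\ell$; adding back $\xi_1 \in \partial V_1$ yields the $p+1$ fixed points on $\partial V$. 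Your suggestion at the end -- ``a simultaneous application of Theorem~\ref{Thm_FixedPoles} to the other basins $U_{\xi_{i_k}}$ on $\partial V$ to ensure the combinatorial count balances'' -- is indeed the right idea, but the proposal stops short of executing it. To close the gap you would need to define the regions $V_j^i$ precisely, verify that they partition $V_1\setminus V$ (up to boundary sets carrying no poles or fixed points), and carry out the inclusion--exclusion count rather than invoking an unspecified planar argument.
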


\begin{corollary}[Fixed points in complement]
\label{Cor_FixedComplementGeneral}
Let $f$ be a Newton map and let $T:= \bigcup_\xi U_\xi$. For a component $V$ of $\C\sm T$ let $p$ be the number of poles of $f$ in $V$, counting multiplicities. Then $\partial V\cap\C$ intersects the boundaries of exactly $p+1$ immediate basins. 
\end{corollary}

\begin{proof}
If $V$ is the only component of $\Cc\setminus T$ (so every root has a single access to $\infty$), the claim is obvious with $p=d-1$. The second case is that there is a unique immediate basin $U_\xi$ with at least two accesses so that $\partial U_\xi\cap\partial V\cap\C\neq\emptyset$. In this case, the claim follows directly from Theorem~\ref{Thm_FixedPoles}. Indeed, let in this case $V_1$ be the component of $\C\sm U_\xi$ containing $V$. Then $V_1$, like $V$, contains $p$ poles and by Theorem~\ref{Thm_FixedPoles} contains also $p$ fixed points, hence $p$ immediate basins. Also counting $U_\xi$ as well, the claim follows.

\begin{figure}[hbt]
\begin{center}
\setlength{\unitlength}{1cm}
\begin{picture}(6,2.8)
\put(0,0){\includegraphics[viewport=0 0 337
158,width=6cm]{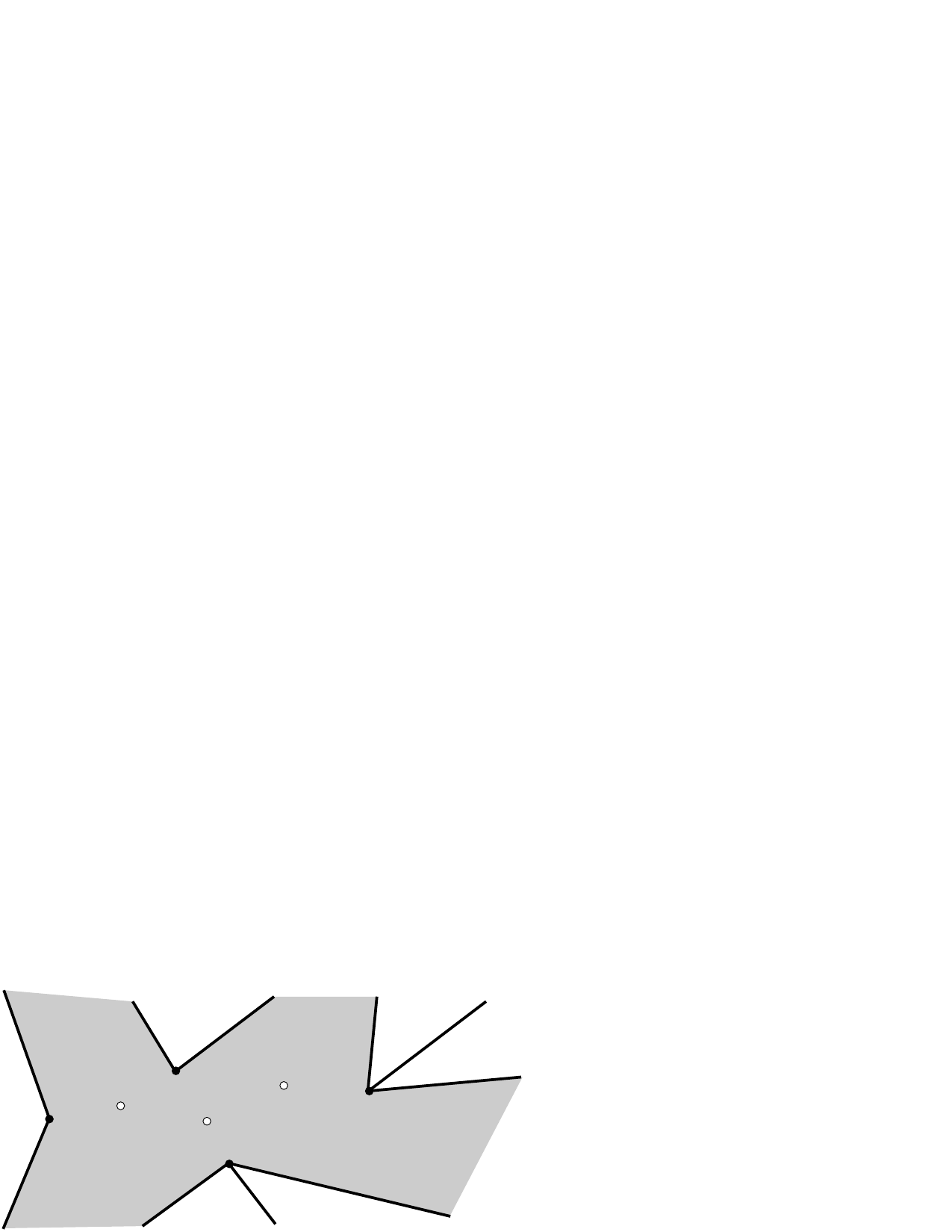}} \put(0.9,0.2){$V$}
\put(0.7,1){$\xi_1$} \put(2.8,0.9){$\xi_2$} \put(3.8,1.4){$\xi_3$}
\put(2,1.5){$\xi_4$} \put(1.9,2.3){$V_4^1$} \put(5.3,2){$V_3^1$}
\put(4.5,2.4){$V_3^2$} \put(3.4,0.1){$V_2^1$} \put(2.3,0.1){$V_2^2$}
\end{picture}
\caption{\label{Fig_FixedComplement} The case $p=3$ in Corollary \ref{Cor_FixedComplement}: the open set $V$ (shaded) is bounded by parts of $\Delta$ (represented by straight black lines). The black dots are fixed points, the white dots represent poles. The $V_j^i$ may well contain further structure of $\Delta$. For Corollary~\ref{Cor_FixedComplementGeneral}, the situation is similar, except that $T$ is a neighborhood of $\Delta$.}
\end{center}
\end{figure}

Now suppose that $U_{\xi_1}$, \dots, $U_{\xi_k}$ are the immediate basins that intersect $\partial V$ in $\C$ and that each have at least two accesses to $\infty$. As above, let $V_1$ be the component of $\C\sm U_{\xi_1}$ containing $V$. Let $m$ be the number of poles in $V_1$. As before, it follows that $V_1$ contains $m$ fixed points. Let $m':=m-p$. For $j=2,\dots,k$, denote by $V_j^1,\dots,V_j^{i_j}$ all components of $\C\sm U_{\xi_j}$ that do not contain $V$ (see Figure~\ref{Fig_FixedComplement}). By Theorem~\ref{Thm_FixedPoles}, each $V_j^{i}$ contains as many poles as fixed points, hence all $V_j^i$ combined contain the missing $m'$ poles and thus $m'$ fixed points. This implies that $V_1\setminus(\bigcup_{j=2}^k\bigcup_{\ell=1}^{i_j} V_j^{\ell})$ contains $m-m'=p$ fixed points, and their immediate basins intersect $\partial V$ in $\C$. Including $U_{\xi_1}$ in the count, we find $p+1$ immediate basins as claimed.
\end{proof}

\begin{corollary}[Existence of shared poles]
\label{Cor_SharedPoles}
Let $f$ be a Newton map and let $V$ be a component of $\C\sm \bigcup_\xi U_\xi$. Then $V$ contains at least one pole that is on the common boundary of two immediate basins.
\end{corollary}
\begin{proof}
If $p$ denotes the number of poles of $f$ in $V$, then by Corollary \ref{Cor_FixedComplementGeneral} $\partial V\cap \C$ intersects the boundaries of $p+1$ immediate basins. Each of these immediate basins has one boundary component (in $\C$) in $V$, and each of these boundary components contains at least one pole by Corollary~\ref{Cor_BoundaryContainsPole}. Since there are only $p$ poles available, at least one pole must be shared. 
\end{proof}
\begin{remark}
Note that every simple pole is on the boundary of at most two immediate basins because otherwise $f$ cannot preserve the cyclic order of the immediate basins near that pole. This was first observed by Janet Head \cite{Head}.
\end{remark}

\section{Proof of Theorem \ref{Thm_PreimagesConnected}}
\label{Sec_Proof}

The main work in this section consists in proving Theorem~\ref{Thm_PreimagesConnected} in the attracting-critically-finite case; we then deduce the general case using standard surgery techniques. 

Let us consider an attracting-critically-finite Newton map $f$ with fixed points $\xi_1,\dots, \xi_d\in\C $ and with channel diagram $\Delta$. Recall that $\Delta$ consists of invariant rays within $U_{\xi_i}$ that connect $\xi_i$ to $\infty$. Denote by $\Delta_n$ the connected component of $f^{-n}(\Delta)$ that contains $\Delta$ (with this convention, $\Delta=\Delta_0$). Every edge of $\Delta_n$ is then an internal ray of a component of some basin $B_{\xi_i}$, while every vertex is either $\xi_i$, or $\infty$, or an iterated preimage of these. By construction, $\Delta_{n} \subset \Delta_{n+1}$ for all $n \geqslant 0$.

The crucial ingredient for the whole proof of Theorem~\ref{Thm_PreimagesConnected} is to show that all $\Delta_n$ for large enough $n$ contain all the poles of $f$. To do that, we now explore consequences of the assumption that some pole is not in $\Delta_n$ for all $n$. Figure \ref{Fig_Degree6} gives an example where a pole is not in $\Delta_1$ (but it is in $\Delta_2$). 

\begin{proposition}[Pole is either in $\Delta_n$ or in infinite unbounded nest]
\label{Prop_PoleNotConnected}
For every pole $p_*$ of $f$ either $p_*\in\Delta_n$ for some $n$, or there exist a component $V_0$ of $\C\setminus \Delta$ and components $V_n$ of $f^{-n}(V_0)$, for all $n \geqslant 1$, with the following properties:
\begin{itemize}
\item
$V_0\supset V_1\supset \dots \supset V_n \supset V_{n+1}$ for all $n$;
\item
every $V_{n+1}$ is a component of $f^{-1}(V_n)$, that is a component of \goodbreak $\Cc \sm f^{-(n+1)}(\Delta) = f^{-(n+1)}(V_0)$;
\item
all $V_n$ are unbounded;
\item
all $V_n$ with $n\ge 1$ are multiply connected and the pole $p_*$ lies in a bounded component of $\partial V_n$.
\end{itemize}
\end{proposition}

The proof of Proposition~\ref{Prop_PoleNotConnected} requires a few preparations; the first step  is the following lemma. 

\begin{lemma}[Preimage inside]
\label{Lem_Inside1}
If there is a pole, say $p_*$, that is not in $\Delta_1$, then there exists a component $V_0$ of $\C\sm \Delta$ and a component $V_1$ of $f^{-1}(V_0)$ such that $V_1$ is multiply connected, $V_1\subset V_0$, and $p_*$ lies in a bounded component of $\partial V_1$.
\end{lemma}

\begin{proof}
There exist one or several components of $f^{-1}(\C\sm\Delta)=\C\sm f^{-1}(\Delta)$ that separate $p_*$ from $\infty$, so they are multiply connected and nested (in the sense that they are disjoint and each one separates the previous one from $p_*$). 
Let $V_1$ be one such component that has $p_*$ on its boundary; it exists since $\infty \in \partial f(V_1)$. Then there is a component $V_0$ of $\C\sm\Delta$ so that $V_1$ is a component of $f^{-1}(V_0)$. Since $\Delta$ is connected, $V_0$ is simply connected.

It remains to prove that $V_1\subset V_0$. Since $\Delta\subset f^{-1}(\Delta)$, we either have $V_1\subset V_0$ or $V_1\cap V_0=\emptyset$.

Let $\gamma\subset V_0$ be a simple closed curve near $\partial V_0$
that surrounds all critical values within $V_0$ (the domain $V_0$ must
contain critical values because it is the image of the multiply
connected domain $V_1$); obviously $\gamma$ does not contain a fixed point of $f$. Then $f^{-1}(\gamma)\cap V_1$ consists of
several simple closed curves, one near each boundary component of $V_1$. Let $\gamma'$ be the outermost of these curves (exactly one boundary component of $V_1$ either contains $\infty$ or separates $\infty$ from $V_1$, and $\gamma'$ is the component of $f^{-1}(\gamma)\cap V_1$ near this boundary component). 

Let $D$ be the bounded component of
$\Cc\setminus \gamma'$ and $V$ the  component of $\Cc \setminus \gamma$ containing $D$; thus $\gamma=\partial V$. If we have $V_1\cap V_0=\emptyset$, then one component of $\Cc\sm\gamma$ is contained in $V_0$, while the other component is $V$ and contains $V_1$ and hence $\gamma'$, as well as at least one more component of $f^{-1}(\gamma)\subset V_1$, say $\gamma''$. 
We thus have $\gamma''\subset D$, so Lemma~\ref{Lem_Lefschetz}{\Lefschetz} implies that $\ol{D}$ contains a fixed point of $f$ (in this case, this can be seen more directly since $f(\gamma'')=\gamma$ and the image under $f$ of the domain surrounded by $\gamma''$ must cover $V$ and hence itself). 
This is a contradiction because
$\ol{D}$ is disjoint from $\Delta$ and all fixed points are
contained in $\Delta$; thus $V_1\subset V_0$ as claimed.
\end{proof}

Suppose $V$ is a component of $\Cc \sm f^{-n}(\Delta)$ for some $n \geqslant 1$. Whether or not $V$ is simply connected, there exists a unique component of $\partial V$ in $\Cc$ that either contains $\infty$ or separates $V$ from $\infty$. We call this component the \emph{outer boundary} of $V$ and denote it by $Z_V$. The outer boundary is a finite graph, and if $V$ is unbounded, then $\ovl V \cap \Delta_n = Z_V$. Denote by $D_V$ the unique component of $\Cc \sm Z_V$ intersecting $V$. By definition, $D_V$ is a topological disk such that $Z_{D_V} = \partial D_V = Z_V$ and $D_V\supset V$.
We will later use the outer boundary $Z_V$ of other domains, as well as the filled-in disk $D_V$, in an analogous way and with analogous notation.

By construction, $Z_V$ is a finite graph; its vertices are $\infty$, poles, prepoles, fixed points, or pre-fixed points. 
In most cases $Z_V$ is not a Jordan curve: it is usually a graph, and some of its vertices may have three or more edges attached, or only one; and it may have edges that are on $\partial V$ from both sides.  
Thus $Z_V$ has no well-defined circular order. However, one can traverse $Z_V$ along the \textit{ideal boundary} of the disk $D_V$. We say that a \emph{parametrization of the ideal boundary of $D_V$}, or for short an \emph{ideal boundary parametrization of $D_V$}, is a piecewise analytic surjection $\gamma_V$ of the circle $\Circle$ onto $Z_V$ with the property that it is an immersion over preimages of edges, and that every edge is traversed at most once in each direction. (Equivalently, for every edge $e$ the preimage $\gamma_V^{-1}(e)$ consists of one or two intervals in $\Circle$, and so that the restriction of $\gamma_V$ to each of these intervals is a diffeomorphism onto the entire edge. Observe that an edge is traversed twice exactly if it bounds $V$ on both sides.) 
We will also assume that the orientation of an ideal boundary parametrization is inherited from the orientation of $V\subset\C$, so the winding number of the ideal boundary parametrization with respect to any point in $D_V$ is $1$. 

 Let $W$ be a component of $f^{-1}(V)$, and $\gamma_W \colon \Circle \to Z_W$ be an ideal boundary parametrization of $D_W$. The proper map $f \colon W \to V$ gives rise to a proper continuous orientation-preserving map $\tau \colon \Circle \to \Circle$ such that $f \circ \gamma_W  = \gamma_V \circ \tau$. We will refer to this map as the \textit{map between ideal boundary parametrizations}. Then $\tau$ is an orientation preserving degree $\delta$ self-cover of $\Circle$, where $\delta \geqslant 1$ denotes the degree of the proper map $f\colon W\to V$. There are $\delta$ choices for $\tau$.

To remedy the issue of pinching at fixed and pre-fixed points along the outer boundaries we will use the following ``thickening'' construction of $\Delta_n$.

First consider a closed set $\Delta^*\supset\Delta$ that contains a neighborhood of $\Delta\cap\C$ so that $f(\Delta^*)\subset\Delta^*$ and $f(\partial\Delta^*)\cap\partial\Delta^*=\{\infty\}$. To construct $\Delta^*$, recall that every edge of $\Delta$ connects $\infty$ to a fixed point $\xi\in\C$ of $f$ and corresponds to a radius in $\disk$ under an inverse Riemann map $\phi\colon\disk\to U_\xi$ with $\phi(0) = \xi$. We then thicken the radius in $\disk$ in a forward invariant way as indicated in Figure~\ref{Fig:ThickenedGraph}. Let $\Delta^*$ be the union of thickened radii for all edges in $\Delta$.

\begin{figure}[htbp]
\includegraphics[trim=20 20 20 20,scale=.6]{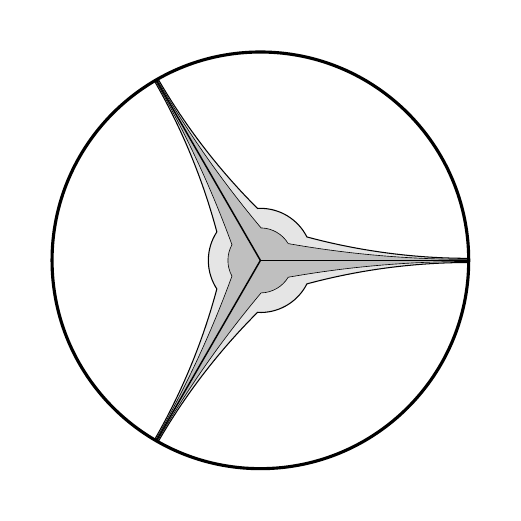}
\caption{The disk $\disk$ represents the immediate basin $U_\xi$ with respect to an inverse Riemann map $\phi\colon \disk\to U_\xi$ with $\phi(0)=\xi$. In the shown case, $U_\xi$ has three accesses to $\infty$; these correspond to three fixed rays in $\disk$. The thickened neighborhood of these rays, transported to $\disk$ by $\phi^{-1}$, consists of a small disk around $0$ and forward invariant neighborhoods of the rays (the union of all shaded domains). The image of the thickened neighborhood is shown in darker gray. 
}
\label{Fig:ThickenedGraph}
\end{figure}

In analogy to $\Delta_n$, which is the component of $f^{-n}(\Delta)$ containing $\infty$, we define $\Delta^*_n$ as the component of $f^{-n}(\Delta^*)$ containing $\infty$. In the initial construction of $\Delta^*$, there is a choice of the ``thickness'' of the thickening, and it has to be small enough so that $\Delta^*_n$ does not connect $\Delta_n$ to different components of $f^{-n}(\Delta)$; moreover, $\Delta^*_n$ should not contain any critical values that are not already in $\Delta_n$. 
Since $\Delta^*$ is forward invariant, we have $f(\Delta_n^*)=\Delta_{n-1}^*\subset \Delta_n^*$ , and the construction implies that $\partial\Delta^*_n\cap \Delta^*_{n-1}$ might consist only of $\infty$, and possibly poles and prepoles (all $\Delta^*_n$ are closed in $\Cc$).  Note that  for every component of $\Cc\sm\Delta^*_n$ the boundary is a simple closed curve, except that it may pass finitely many times through $\infty$ or through poles or prepoles (but only finitely many poles or prepoles because $\Delta_n$ only has finitely many vertices).

To accomplish the proof of Proposition~\ref{Prop_PoleNotConnected} we will need to show that $V_1$ and certain of its iterated preimages are unbounded. We will get this from a more general lemma that gives us control on the mapping degree of the map between ideal boundary parametrizations. The proof adopts ideas from the proof of \cite[Theorem~4.8]{RS} (quoted above as Lemma~\ref{Lem_Lefschetz}).

\begin{lemma}[Ideal boundary mapping degree]
\label{Lem_OuterDeg}\label{Lem_V1}
Let $f$ be a Newton map and $V$ be an unbounded component of $\Cc \sm f^{-n}(\Delta)$. Suppose $W$ is a component of $f^{-1}(V)$ with $W \subset V$, and let $\tau \colon \Circle \to \Circle$ be the map between the ideal boundary parametrizations of $D_W$ and $D_V$. Then $W$ is unbounded and has $\deg \tau $ distinct accesses to $\infty$. 
\end{lemma}

\begin{proof}
We use the thickened graphs $\Delta_n^*$ and set $V^* := V \sm \Delta_n^*$. By construction, $Z_{V^*} \cap \C$ contains no fixed points or critical values, and $\infty$ is an accessible boundary point of $V^*$ through exactly same accesses as in $V$. The boundary $\partial V^*$ is not a simple curve: it has a finite number of intersections at $\infty$, poles or prepoles.

Let $D\subset \Cc$ be a (small) open neighborhood of $\infty$ such that $D\cap V^*$ has exactly one component in each access of $V^*$ to $\infty$, and so that $\partial D$ is a piecewise analytic curve. Set $V' := V^* \sm D$; this is a bounded domain in $\C$.

Let $W'$ be the unique component of $f^{-1}\left(V'\right)$ intersecting $W$. This may or may not be a topological disk. Note that, by construction, the outer boundary $Z_{W'}$ is a simple closed curve except possibly at prepoles (but not poles). Let $\gamma \colon \Circle \to Z_{W'}$ be an ideal boundary parametrization of $D_{W'}$, $\gamma_0 \colon \Circle \to Z_{V'}$ be an ideal boundary parametrization of $D_{V'}$, 
and $\tau' \colon \Circle \to \Circle$ be the corresponding map  
between the ideal boundary parametrizations. By choosing the disk $D$, as well as the thickening, small enough, we have
\begin{equation}
\label{Eq:deg1}
\deg \tau = \deg \tau'=: \delta.
\end{equation}

\begin{figure}[htbp]
\includegraphics[scale=.8]{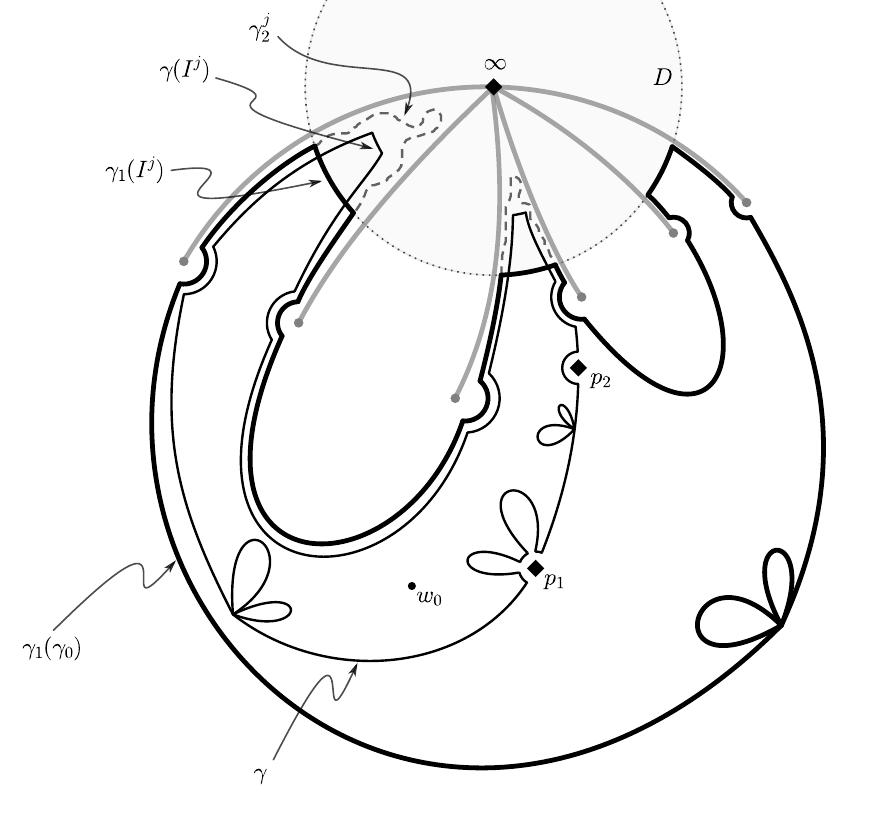}

\caption{The construction of curves in the proof of Lemma~\ref{Lem_OuterDeg}. The immersed circle $\gamma$ (which parametrizes the ideal boundary of $D_{W'}$) is shown in solid thin black; solid thick black lines represent the immersed circle $\gamma_0$ (the ideal boundary of $D_{V'}$) and $\gamma_1 = f \circ \gamma$; the curve $\gamma_1$ coils $\delta$ times along $\gamma_0$ (twice in this schematic example) and coincides with $\gamma_0$ as a set. The local modifications of $\gamma_1$ in each of the access to $\infty$ in $W$ (curves $\gamma_2^j$) are shown in dashed black. The thick gray lines represent the invariant rays $\Delta \cap Z_V$ connecting fixed points (gray dots) and $\infty$; $p_1$ and $p_2$ are poles on $Z_{W}$. On this picture $W$ has 2 accesses to $\infty$. 
}
\label{Fig_MapDeg}
\end{figure}

\textsc{Step 1.}
For $\gamma_1 := f \circ \gamma$ and every $w_0\in W'$, we have
\begin{equation}
\frac{1}{2\pi i}\oint \limits_{\gamma_1 - w_0} \frac{d \zeta}{\zeta} = \delta
\label{Eq:New1}
\end{equation}
because the winding number of the curve $\gamma_1$ around $w_0$ (the integral on the left) is exactly the mapping degree of $\tau' \colon \Circle \to \Circle$ (here we used~(\ref{Eq:deg1}) and the fact that an ideal boundary parametrization is a piecewise analytic immersion of the circle with winding number $1$).

\textsc{Step 2.} 
We want to count accesses to $\infty$ by (a variant of) an integral that counts fixed points of $f$, using the fact that $\infty$ is a fixed point of $f$, and the integral will do the count so that every access counts as a separate fixed point. We will employ a variant of Lemma~\ref{Lem_Lefschetz} that counts fixed points; that lemma requires a curve $\gamma$ so that $f\circ\gamma$ does not enter the domain bounded by $\gamma$. In our case, $\gamma$ bounds $ W'$, but $f\circ\gamma$ enters $W'$ once in every access, within $D$. (There are also some common boundary points of $W'$ and $V'=f(W')$ outside of $D$, but there are at prepoles and hence away from fixed points of $f$). 
We will remedy this issue by replacing $\gamma_1=f\circ\gamma$ by a new immersed circle $\gamma_2$ (which is not the image of $\gamma$ under $f$). 

If $W$ is bounded, then there is no issue and we simply set $\gamma_2 := \gamma_1$ and go to the passage after~(\ref{Eq:New6}) (which leads the boundedness assumption to a contradiction).

Let $m\ge 1$ be the number of accesses of $W$ to $\infty$, and enumerate the accesses from $0$ to $m-1$. There are $\delta m$ intervals $I^j\subset\Circle$ for which $\gamma_1\left(I^j\right)\subset\partial D$ (exactly $\delta$ intervals for which the image is within a given access to $\infty$); label them so that for $j \in\{0,1,\dots, \delta m - 1\}$ we have $\gamma_1(I^j)\subset\partial D$ within the $\lfloor j/\delta\rfloor$-th access of $W$. Pick some piecewise analytic Jordan arc $\gamma_2^j \colon I^j \to \ovl D$ joining the two ends of $\gamma_1(I^j)$ and avoiding $\Delta$  and the image of $\gamma$. Define an immersion $\gamma_2 \colon \Circle \to \C$ by putting
\begin{equation*}
\gamma_2(t) := \left\{
\begin{aligned}
&\gamma_2^j(t) \quad \text{ if }t \in I^j \text{ for some }j;\\
&\gamma_1(t) \quad \text{ otherwise}
\end{aligned}
\right.
\end{equation*}
(see Figure~\ref{Fig_MapDeg}). Without loss of generality we can assume that $\gamma_2^{j_1}\left(I^{j_1}\right)$ and $\gamma_2^{j_2}\left(I^{j_2}\right)$ coincide as sets for those $j_1$ and $j_2$ with $\lfloor j_1 /\delta\rfloor = \lfloor j_2 /\delta\rfloor$. Since $\gamma_2$ is clearly homotopic to $\gamma_1$ in $\C \sm \{w_0\}$, homotopy invariance of winding numbers yields (using~(\ref{Eq:New1}))
\begin{equation}
\label{Eq:New2}
\frac{1}{2\pi i}\oint \limits_{\gamma_2 - w_0} \frac{d \zeta}{\zeta} = \frac{1}{2\pi i}\oint \limits_{\gamma_1 - w_0} \frac{d \zeta}{\zeta} = \delta.
\end{equation}

\textsc{Step 3}. 
The domain $D_{W'}$ is a topological disk, thus contractible. Therefore, the curve $\gamma$, being the ideal boundary  parametrization of $D_{W'}$, is homotopic within $\ovl{D_{W'}}$ to the point $w_0$. More precisely, there is a (differentiable) homotopy $G\colon \Circle\times[0,1]\to \ovl{D_{W'}}$ with $G(t,0) \equiv w_0$ and $G(t, 1)=\gamma(t) $ and so that $G(t,s)\in D_{W'}$ for all $t\in\Circle$ and all $s<1$. (Note that $W'$ is not necessarily compactly contained in $V^*$: their boundaries might intersect at some prepoles).  

Setting $\gamma^{\langle s\rangle}(t):=G(t,s)$, we obtain a family of curves  $\gamma^{\langle s\rangle}\colon \Circle\to \ovl{D_{W'}}$ for $s\in[0,1]$ and $\gamma^{\langle s\rangle}\colon \Circle\to {D_{W'}}$ for $s\in[0,1)$. We then have for $s\in[0,1)$

\begin{equation*}
\frac{1}{2\pi i} \oint\limits_{\gamma_2 - \gamma^{\langle s\rangle}} \frac{d \zeta}{\zeta} = \frac{1}{2\pi i} \oint\limits_{\gamma_2 - w_0} \frac{d \zeta}{\zeta} = \delta
\label{Eq:New3a}
\end{equation*}
because the left integral is well defined for all $s<1$ (the traces of $\gamma_2$ and $\gamma^{\langle s\rangle}$ are disjoint) and depends continuously on $s$ with values in $\N$. By continuity, the same then holds for $s=1$ (observe that $\gamma_2$ was constructed so that $\gamma_2(t)\neq\gamma(t)$ for all $t$). This gives us (using~\ref{Eq:New2}) 
\begin{equation}
\frac{1}{2\pi i} \oint\limits_{\gamma_2 - \gamma} \frac{d \zeta}{\zeta} = \frac{1}{2\pi i} \oint\limits_{\gamma_2 - w_0} \frac{d \zeta}{\zeta} = \delta.
\label{Eq:New3}
\end{equation}

\textsc{Step 4}.
Now we go back from $\gamma_2$ to $\gamma_1$, taking into account the changes. To do this, assume, up to local  affine reparametrization, that $I^j = [-1, 1]$, and $J^j := I^j_1 \sqcup I^j_2$ is a formal union of two identical copies of $I^j$. For each $j$, define a closed piecewise analytic curve $\gamma_3^j \colon J^j \to \ovl D \sm \{\infty\}$ by setting 
\begin{equation*}
\gamma_3^j(t) := \left\{
\begin{aligned}
\gamma_2^j(t), \text{ if }s \in I_1^j;\\
\gamma_1(-t), \text{ if }s \in I_2^j.
\end{aligned}
\right.
\end{equation*}

Note that $J^j$ is a topological circle, so $\gamma_3^j$ is a piecewise analytic circle map.
Using~(\ref{Eq:New3}) we then have
\begin{equation}
\label{Eq:New6}
\delta = \frac{1}{2\pi i} \oint\limits_{\gamma_2 - \gamma} \frac{d \zeta}{\zeta} = \frac{1}{2 \pi i} \oint\limits_{\gamma_1 - \gamma} \frac{d \zeta}{\zeta} + \sum \limits_{j=0}^{\delta m - 1} \frac{1}{2 \pi i} \oint\limits_{\left(\gamma_3^j - \gamma\right)|_{J^j}} \frac{d \zeta}{\zeta}
\;.
\end{equation}

The integral over $\gamma_1 - \gamma$ counts the number of fixed points in $\ovl{D_{W'}}$ (this can be easily seen by subdividing this component into small pieces; for details see, for example, \cite[Lemma~4.7]{RS}). 
 But we have no fixed points in this component, thus the integral over $\gamma_1 - \gamma$ in~(\ref{Eq:New6}) is zero. 

Now let us look at the integrals over $\gamma_3^j - \gamma$ in~(\ref{Eq:New6}). They split into two types: (A) $\gamma(I^j) \not\subset D$ (which means that $\gamma(I^j)$ lies in some small neighborhood of a pole $p \in Z_W$); and (B) $\gamma(I^j) \subset D$. 

The integrals of type (A) contribute $0$ to the sum. Indeed, $\gamma(J^j)$  is contractible along its image to a point that lies outside of $\ovl D$ (for small enough $D$). Therefore, integrals of type (A) compute the winding number of $\gamma_3^j$ in $\C$ with respect to some point outside the bounded component of $\C \sm \gamma_3^j(J^j)$. Such winding numbers are zero. On the other hand, each of the integrals of type (B) contributes exactly 1 to the sum for a similar reason. We have $m$ such integrals in total, one in each access.  

Summing up all together, we see that~(\ref{Eq:New6}) implies $\delta=m$ as claimed.
In particular, we have $1\le \delta=m$, so $W$ has at least one access and is thus unbounded.
\end{proof}

\begin{proof}[Proof of Proposition~\ref{Prop_PoleNotConnected}]
We argue by induction on $n$; we already have $V_0$ and $V_1$ so that $V_0\supset V_1$, both are unbounded (for $V_1$ this is Lemma~\ref{Lem_V1}), and $V_1$ is multiply connected and has $p_*$ lying on a bounded component of its boundary. 

For the inductive step, suppose we have unbounded components $V_0\supset V_1\supset\dots\supset V_{n-1}\supset V_n$ so that every $V_l$ with $l\ge 1$ is a multiply connected component of $f^{-1}(V_{l-1})$ with the pole $p_*$ lying on a bounded component of $\partial V_l$.

Since $V_n$ is unbounded, hence $\infty\in\partial V_n$, there is a component $V_{n+1}$ of $f^{-1}(V_n)$ with $p_*\in\partial V_{n+1}$; there might be several such components and we choose one as follows. 

Since $\Delta$ is forward invariant under $f$ and all edges to $\infty$ in each $\Delta_n$ are already in $\Delta$, the set $V_{n}$ contains an  access to $\infty$ that is also an access to $\infty$ in $V_{n-1}$. This implies that $V_n$ has an access to $p_*$ and we choose $V_{n+1}$ among the components of $f^{-1}(V_n)$ that has the same access to $p_*$. Therefore $V_{n+1}\cap V_n\neq\emptyset$, hence $V_{n+1}\subset V_n$. By Lemma~\ref{Lem_V1}, the set $V_{n+1}$ is unbounded. 

The points $\infty$ and $p_*$ are both in $\partial V_{n+1}\subset f^{-(n+1)}(\Delta)$. If they are in the same boundary component, then $p_*\in\Delta_{n+1}$ and we are done. Otherwise, $p_*$ is contained in a bounded component of $\partial V_{n+1}$, so $V_{n+1}$ separates $p_*$ from $\infty$. Note that in such case the choice for $V_{n+1}$ is unique and we keep the induction going.
\end{proof}

Finally, we are ready to prove the key ingredient for the proof of Theorem~\ref{Thm_PreimagesConnected} for attracting-critically-finite maps.

\begin{theorem}[Poles connect to $\infty$]
\label{Thm_PolesConnect}
The graph $\Delta_n$ contains all poles of $f$ for all sufficiently large $n$.
\end{theorem}

\begin{proof} 

We argue by contradiction. If the claim is false, then by Proposition~\ref{Prop_PoleNotConnected} there is a pole $p_*$ and a sequence of multiply connected unbounded components $V_n$ of $\Cc\sm f^{-n}(\Delta)$ so that $p_*$ is in a bounded component of $\partial V_n$ (in particular, $V_n$ separates $p_*$ from $\infty$). Denote by $Z_n$ the outer boundary of $V_n$. 


Since the sequence $(V_n)_{n=0}^\infty$ is nested, any $z_0 \in \C$ is either in $\ovl{V_n}$ for all $n$, or it leaves some $\ovl {V_{n_0}}$ and then $z_0 \not\in \ovl{V_n}$ for $n \geqslant n_0$. Therefore, since $f$ has only a finite number of fixed points and poles, there exists an index $n_0$ such that $\ovl{V_n}$ contains the same poles and the same fixed points  for all $n \geqslant n_0$. By construction, fixed points in $\ovl{V_n}$ can lie only in $Z_n$. In fact, each $Z_n$ contains $\infty$ and thus some number of edges in $\Delta$ starting at $\infty$, which must terminate at fixed points in $\C\cap Z_n$. At the same time, $Z_n$ might potentially contain fixed points that are not connected to $\infty$ by an edge in $Z_n$, and thus these fixed points are connected to a prepole by an edge in $Z_n$.
  
All $V_n$ have at least one access to $\infty$. The number of accesses to $\infty$ within $V_n$ cannot increase in $n$ because $V_{n+1}\subset V_n \subset \Cc \sm \Delta_n$ and all $\Delta_n$ have the same edges to $\infty$ (given by the channel diagram $\Delta$). 

Combining this, we obtain control on the structure of $V_n$ for large $n$. There is a minimal index, for which we keep the notation $n_0$, so that for all $n \geqslant n_0$ all $V_n$ have the same accesses to $\infty$, and all $Z_n$ contain the same fixed points and the same poles. 


For $n\ge n_0$, let $\gamma_n \colon \Circle \to \Cc$ be an ideal boundary parametrization of $D_{V_n}$; by definition $\gamma_n$ is a piecewise analytic surjection $\gamma_n\colon\Circle\to Z_n$ with winding number~$1$.  Let $\tau_n \colon \Circle \to \Circle$ be the corresponding map between the ideal boundary para\-me\-tri\-za\-tions of $D_{V_{n+1}}$ and $D_{V_{n}}$: this map satisfies $f \circ \gamma_{n+1} = \gamma_{n} \circ \tau_n$ for all $n$. Observe for later use that we are free to pre-compose $\tau_n$ by any (orientation preserving) piecewise analytic circle homeomorphism, at the expense of re-parametrizing $\gamma_{n+1}$ (which we are free to do). 

Our choice of the index $n_0$ above has natural implications on the behavior of the functions $\tau_n$ for all $n \geqslant n_0$. First of all, if $m \geqslant 1$ is the number of accesses to $\infty$ in $V_n$, which we know is constant for all $n \geqslant n_0$, then Lemma~\ref{Lem_OuterDeg} implies that $\tau_n$ is a covering map of degree exactly $m$ for all $n \geqslant n_0$. 
Moreover, the choice of $n_0$ implies that there exist $3m$ distinct points  $t_1, s_1, t'_1, t_2, s_2, t'_2,\dots, t_m, s_m, t'_m \in \Circle$, listed in cyclic order, so that $\{s_1, \ldots, s_m\} = \gamma_n^{-1}\left(\{\infty\}\right)$, the $\gamma_n(t_i)$ and $\gamma_n(t'_i)$ are the finite fixed points of $f$, and so that for all $i \in \{1, \ldots, m\}$ the restrictions of $\gamma_n$ to the intervals $[t_i, t'_i]$ are homeomorphisms to their images. The last assertion follows from the fact that $\gamma_n(t_i)\neq\gamma_n(t'_i)$ (otherwise, there would be either an edge from $\gamma_n(t_i)$ to $\infty$ that was followed directly by the same edge in backwards direction, or there would be two edges connecting $\gamma_n(t_i)$ to $\infty$ followed one after the other; both possibilities are clearly impossible: loops in the ideal boundary can only ``stick in to the inside'', not ``stick out to $\infty$'').

Moreover, we have $t'_i\neq t_{i+1}$ (but possibly $\gamma_n(t'_i)=\gamma_n(t_{i+1})$) because for $n\ge 1$ at every fixed point in $\C$ the graph $\Delta_n$ does not have adjacent edges to $\infty$. We then have intervals  $I_i := [t'_i, t_{i+1}] \subset \Circle$ for $i \in \{1, \ldots, m\}$ of positive length.

A priori, the $3m$ points $t_1, s_1, t'_1,\dots, t_m, s_m, t'_m$  depend on $n$, but since the fixed points along $Z_n$ are the same for $n \geqslant n_0$, a suitable piecewise linear reparametrization of $\tau_n$ as mentioned above can bring these points to identical positions for all $n$. Hence $\tau_n(s_i) = s_i$, $\tau_n(t_i) = t_i$, $\tau_n(t_i') = t_i'$ and $\tau_n \left([t_i, t_i']\right) = [t_i, t_i']$ for all $n \geqslant n_0$ and all $i$.
 Then every component of $\Circle\sm \bigcup I_i$ is mapped homeomorphically to itself under each $\tau_n$.

\begin{claim}
\label{Claim:Homeo}
There exists an interval $I_i$ such that  for all $n \geqslant n_0$ the image $\gamma_n(I_i)$ does not contain poles and $\tau_n$ restricted to ${I_i}$ is  an orientation-preserving homeomorphism to itself. 
\end{claim}

Let $J_n\subset\{1,\dots,m\}$ consist of those indices $j$ for which $\tau_n$ restricted to $I_j$ is not a homeomorphism. Since the closure of $\Circle \sm \bigcup I_i$ is fixed under $\tau_n$ and $\tau_n(s_i) = s_i$, it follows that for every $n>n_0$ and $j\in J_n$, the interval $I_j$ contains at least one point $p_{ij}$ with $\tau_n(p_{ij}) = s_i$. Therefore, $s_j$ has at least $|J_n|+1$ preimages under $\tau_n$ (the point $s_j$ as well as the points $p_{ij}\in I_j$), and thus the total mapping degree $\deg \tau_n$ is at least $|J_n|+1$. Since by Lemma~\ref{Lem_OuterDeg} this degree equals $m$, we have $|J_n|<m$, so for every $n$ there is at least one interval $I_i$ on which $\tau_n$ acts as homeomorphism. In fact, such indices $i$ do not depend on $n$ because the structure of $Z_n$ is the same for all $n \geqslant n_0$, so for fixed $i$ the curve $\gamma_n(I_i)$ cannot acquire additional poles as $n$ grows. This proves \textsc{Claim \ref{Claim:Homeo}}.


Up to renaming, assume $I_1 = [t'_1, t_2]$ is an interval as given by \textsc{Claim \ref{Claim:Homeo}}, and put $\xi_1 := \gamma_n(t'_1)$, $\xi_2 := \gamma_n(t_2)$ (possibly with $\xi_1 = \xi_2$). Let $U_1$ and $U_2$ be the immediate basins of $\xi_1$ and $\xi_2$ and set $\Gamma_1:=\gamma_n\left((s_1,t'_1)\right)$ and $\Gamma_2:=\gamma_n\left((t_2,s'_2)\right)$ (these are internal rays in $U_1$ resp.\ $U_2$, fixed by $f$, that connect $\xi_1$ or $\xi_2$  to $\infty$, and hence do not depend on $n$).

\begin{claim}
\label{Claim:Bridge}
$\gamma_{n}\left(I_1 \sm\{t'_1, t_2\}\right)$ does not contain fixed points of $f$ for all $n \geqslant n_0$.
\end{claim}

Suppose to the contrary that $\gamma_n(I_1 \sm\{t'_1, t_2\})$ contains a fixed point of $f$, say $\xi$, for all $n \geqslant n_0$ (the cases $\xi = \xi_1$ or $\xi = \xi_2$ are possible). 
Every access to $\xi$ within $V_n$ corresponds to a unique value $v_{n,i} \in I_1$ with $\gamma_n(v_{n,i})=\xi$, and there are finitely many such values because $Z_n$ is a finite graph.

Since $V_{n+1}\subset V_n$, every access to $\xi$ in $V_{n+1}$ is contained in an access within $V_n$. Moreover, every access to $\xi$ in $V_{n+1}$ must map under $f$ to an access to $\xi$ in $V_n$, and this must happen so as to preserve the order of the corresponding points $v_{n+1,i}$ and $v_{n,i'}$ on $I_1$ under $\tau_n$.

This implies that there is an access to $\xi$ within $V_{n+1}$ that is contained in the access within $V_n$ that it maps to under $f$. These accesses are bounded by edges in $Z_n$. Now we use again an inverse Riemann map $\phi \colon \disk\to U_\xi$ with $\phi(0)=\xi$ and its induced self-map $h:=\phi^{-1}\circ f\circ \phi\colon \disk\to\disk$; it has the form $h(z)=\lambda z^k$ with $|\lambda|=1$ and $k\ge 2$. Any edge in $Z_n$ terminating at $\xi$ corresponds under $\phi^{-1}$ to a radius in $\disk$. Therefore, the two accesses to $\xi$ within $V_{n+1}$ resp.\ $V_n$ correspond to sectors $S_{n+1}$ and $S_n$, both bounded by two radii each, and satisfying $S_{n+1}\subset S_n$ and $h(S_{n+1})=S_n$. Since the action of $h$ on $\partial\disk$ is an expanding circle endomorphism of degree $k\ge 2$, this implies that $S_{n+1}$ contains a  radius that is fixed by $h$. Therefore, $V_n$ contains an edge that is fixed by $f$; this must be an edge from $\xi$ to $\infty$, a contradiction. This proves \textsc{Claim \ref{Claim:Bridge}}.

\textsc{Claim~\ref{Claim:Bridge}} implies that for all $n \geqslant n_0$ the set $\gamma_n \left(I_1 \sm \{t_1', t_2\}\right)$ contains a non-trivial Jordan arc (without endpoints) connecting $\xi_1$ to $\xi_2$ that avoids fixed points of $f$; call this arc a \emph{bridge between $\xi_1$ and $\xi_2$} and denote it by $X_n$. If $\xi_1=\xi_2$, then this bridge is a Jordan curve with the point $\xi_1 = \xi_2$ missing. If such a bridge exists, it is unique for a given $n$. By \textsc{Claim~\ref{Claim:Homeo}}, $f$ maps the bridge $X_{n+1}$ homeomorphically onto $X_n$, and it maps its edges to edges and vertices to vertices in a one-to-one fashion; in particular all $X_n$ have the same number of edges, and the same number of vertices.
Moreover, no bridge contains a fixed point. This implies that every vertex can be in only finitely many bridges. 

The two arcs $\ovl{X}_n$ and $\ovl\Gamma_1 \cup \ovl\Gamma_2$ are both simple arcs that connect $\xi_1$ to $\xi_2$, and they are are disjoint except for their endpoints. Therefore, $\ovl X_n \cup \ovl\Gamma_1 \cup \ovl\Gamma_2=\Gamma_1\cup\{\xi_1\}\cup X_n\cup\{\xi_2\}\cup\Gamma_2\cup\{\infty\}$ is a Jordan curve in $Z_n$. 

Another consequence of \textsc{Claim \ref{Claim:Bridge}} is that all $V_n\cap U_1$ and $V_n\cap U_2$ are connected: any connected component must be bounded by two edges in $Z_n$, and for $U_1$ two such edges are $\Gamma_1$ and the first edge in $X_n$, and no further edges in $\ovl X_n \cup \ovl\Gamma_1 \cup \ovl\Gamma_2$; any additional edges would disconnect $V_n$; similar for $U_2$. 

The bridge $X_n\subset\Delta_n$ is finite graph passing through a chain of Fatou components from the attracting basins, where each of the components in the chain shares a pole or a prepole on its boundary with the adjacent component(s) on the chain. Thus vertices along a bridge that are not endpoints are alternating (pre-)poles and preimages of the finite fixed points of $f$.

By our standing assumption, the pole $p_*$ never belongs to $Z_n$. From here we want to arrive at the final contradiction by employing a ``sweeping argument'' as follows.

\begin{claim}
\label{Claim:Nest}
If $W_n$ is the component of  $\Cc \sm \left(\ovl X_n \cup \ovl\Gamma_1 \cup \ovl\Gamma_2\right)$ containing $V_n$, then $\left(\ovl W_n\right)_{n\geqslant n_0}$ is a nested sequence of closed topological disks and 
\begin{equation}
\label{sweeping}
\bigcap \limits_{n=n_0}^\infty \ovl W_n = \ovl\Gamma_1 \cup \ovl\Gamma_2.
\end{equation}
\end{claim}

Since (\ref{sweeping}) is impossible when $\partial V_n$ has a bounded component disjoint from $Z_n$ and containing $p_*$, \textsc{Claim~\ref{Claim:Nest}} will give us the final contradiction that yields the conclusion of Theorem~\ref{Thm_PolesConnect}.


%

To prove the claim, assume that the inverse Riemann map $\phi_i\colon\disk\to U_i$ with $\phi_i(0)=\xi_i$ and $\phi_i^{-1}\circ f \circ \phi_i(z)=z^{k_i}$ is rotated so that $\phi_i^{-1}(\Gamma_i)$ is the radius $(0,1)$ of the disk $\disk$.

Let $e^1_n := X_n \cap U_1$ be the edge in $X_n$ that connects $\xi_1$ to a (pre-)pole $r^1_n\in\partial U_1$. The edge $e^1_n$ corresponds to some ray $(0,1)\cdot \exp(2\pi i \alpha)$ in $\disk$. Since $f(X_{n+1})=X_n$, we have $f(e^1_{n+1})=e_n^1$, and since $V_{n+1}\subset V_n$, we have $e^1_{n+1} = \phi_1 \left((0,1)\cdot \exp(2\pi i \alpha/k_1)\right)$.
Analogous constructions and results hold for the edge $e^2_n := X_n \cap U_2$ that connects $\xi_2$ to $r^2_n \in \partial U_2$.

For a given $i \in \{1,2\}$, the endpoints $r_n^i$ of $e_n^i$ satisfy $r_n^i\to\infty$ as $n \to \infty$. Indeed, connect $r_n^i$ to $\infty$ by injective curves $\sigma_n^i\subset U_i\sm\{\xi_i\}$; for simplicity, use hyperbolic geodesics within $(W_n\cap U_i) \sm \{\xi_i\}$ so that $f(\sigma_{n+1}^i)=\sigma_n^i$ (these can be constructed inductively by appropriate preimages of $f$). Since $\infty$ is a repelling fixed point, the spherical lengths of all $\sigma_n^i$ are finite (the curves converge to $\infty$ geometrically), and as $n\to\infty$ these lengths tend to $0$. In particular $r_n^i\to\infty$ as $n\to\infty$.

Define $L_n:=X_n \sm (e_n^1 \cup e_n^2)$. By construction, $L_n$ is either a point or a simple Jordan arc (closed as a set). In the first case this implies (\ref{sweeping}) as follows: the curves $\sigma_n^i$ cut $U_i$ into two pieces, and hence the simple closed curves $\ovl{\sigma_n^1\cup\sigma_n^2}$ subdivide $\ovl W_n$ into three components: one is surrounded by $\ovl{\sigma_n^1\cup\sigma_n^2}$, and the other two are within $U_1$ and $U_2$, respectively. Therefore, any point in $\bigcap_{n=n_0}^\infty \ovl W_n$ is in $U_1$ or $U_2$, or is contained in, or surrounded by all the Jordan curves $\ovl{\sigma_n^1\cup\sigma_n^2}=\sigma_n^1\cup\sigma_n^2\cup \{r_n^1,\infty\}$. But the spherical diameters of $\ovl{\sigma_n^1\cup\sigma_n^2}$ tend to $0$, so these curves shrink to the point $\{\infty\}$, and $\ovl {W_n}\cap U_1$ shrinks to $\Gamma_1 \cup \{\xi_1\}$ as $n\to\infty$, and similarly for $U_2$. This proves (\ref{sweeping}) when $r_n^1=r_n^2$.

If not, we will show that the spherical lengths of $L_n$ tend to zero, and this will prove the claim in the same way, using the Jordan curves $\sigma_n^1\cup\sigma_n^2\cup\{\infty\}\cup \ovl L_n$. 

To begin with, $f$ maps $L_{n+1}$ homeomorphically onto $L_n$, and $(L_n)_{n\ge n_0}$ is a sequence of (not necessarily disjoint) arcs  composed of the same number of edges of $\Delta_n$. The Jordan arc $L_n$ contains finitely many (pre)poles. Label them by  $r_n(1), \ldots, r_n(l)$ in the natural order, starting with $r_n(1) = r_n^1$ and terminating with $r_n(l) = r_n^2$. Note that $L_{n+1}$ also contains exactly $l$ prepoles, with $f\left(r_{n+1}(i)\right) = r_n(i)$ for all $i$.

Consider small neighborhoods around all $r_n(i)$ on $L_n$, and small neighborhoods of the components of $L_n \sm \bigcup_{i=1}^k r_n(k)$ in their respective Fatou components (we clarify the meaning of ``small'' below). Let $U(L_n)$ be the union of both types of neighborhoods. Moreover, we can choose these neighborhoods so that $f \colon U(L_{n+1}) \to U(L_n)$ is a biholomorphic map for all $n$ larger than some $n_1 \geqslant n_0$. Indeed, for $n$ sufficiently large $f \colon L_{n+1} \to L_n$ is a homeomorphism that avoids any postcritical points (by \textsc{Claim \ref{Claim:Bridge}} every bridge, and thus $L_n$, avoids fixed critical points, while a non-fixed critical point can stay on $L_n$ only for finitely many $n$ and then never returns).  Then construct $U(L_n)$ small enough so as to avoid critical values, and construct $U(L_{n+1})$ as the preimage component of $U(L_n)$ around $L_{n+1}$, and continue inductively; any time a critical value is encountered, we may have to shrink these neighborhoods, but this happens at most once for each of the finitely many critical values. (We do not claim, nor use, the fact that $U(L_n)$ for large $n$ avoids postcritical points.)

By construction, $U(L_n)$ is a topological disk. Each $U(L_n)$ carries its own normalized hyperbolic metric, and all $L_n$ have the same hyperbolic length with respect to it. The two endpoints of $L_n$ are the points $r_n^1$ and $r_n^2$ that converge to $\infty\not\in U(L_n)$. Therefore, the endpoints of $L_n$ are close to $\partial U(L_n)$ in the spherical metric. This implies that the spherical lengths of $L_n$ converge to zero, and as indicated above this proves \textsc{Claim \ref{Claim:Nest}} and also the theorem.
\end{proof}

\begin{corollary}[Prepoles connect to $\infty$]
\label{Cor:PrepolesConnect}
Every prepole of $f$ is contained in $\Delta_n$ for sufficiently large $n$. 
\end{corollary}
\begin{proof}
By Theorem~\ref{Thm_PolesConnect}, there is an index $n$ so that $\Delta_{n}$ contains all poles and hence connects all poles to $\infty$. Then $\Delta_{n+1}$ connects every preimage of a pole to a pole, and hence also to $\infty$, and by induction $\Delta_{n+k}$ connects every prepole in $f^{-(k+1)}(\infty)$ to $\infty$. 
\end{proof}

Note that in general $f^{-n}(\Delta)$ consists of an increasing number of components; every particular one is in $\Delta_{n'}$ for sufficiently large $n'$, but the number of components still tends to $\infty$.

We can now finally complete the proof of Theorem~\ref{Thm_PreimagesConnected}: it says that every Fatou component in the basin of any root can be connected to $\infty$ by a finite chain of other Fatou components in the basins of roots, so that adjacent components are connected by common boundary points that are poles or prepoles.

\begin{proof}[Proof of Theorem~\ref{Thm_PreimagesConnected}]
By definition, a Newton map $f$ is attracting-critically-finite if every critical point of $f$ in the basin $B_\xi$ of a root $\xi$ lands at $\xi$ after finitely many iterations. We first prove the theorem in the special case that $f$ is attracting-critically-finite, and then reduce the general case to this case using standard quasiconformal surgery.

Suppose that $f$ is attracting-critically-finite. 
Every component of every $B_{\xi_i}$ is an iterated preimage of the immediate basin $U_{\xi_i}$, so this component contains a pole or a prepole on its boundary, and this prepole can be connected to $\infty$ by some appropriate $\Delta_n$. This $\Delta_n$ is a finite union of edges, each of which is a curve in some component of $B_{\xi_i}$ that terminates at a prepole. This implies the claim.

One can turn any Newton map $f_0$ into an attracting-critically-finite Newton map $f_1$ by a quasiconformal surgery procedure that was pioneered by Shishikura \cite{Shishikura} and that is now standard (compare the exposition by Branner and Fagella \cite{BrannerFagella}): one needs to change the dynamics on compact subsets of finitely many components of the $B_\xi$ and glue the new dynamics in by a quasiconformal map. All fixed  points in $\C$ of $f_1$ are superattracting, so by Proposition~\ref{Prop_Head} the new map $f_1$ is a Newton map as well.

The old and new maps are topologically conjugate on their Julia sets and the conjugation respects Fatou components and basins; in particular, the connection between components of the basins required in the theorem is unchanged by the surgery or its inverse. We proved that this connection exists for $f_1$, so it must have existed before for $f_0$. Therefore, Theorem~\ref{Thm_PreimagesConnected} holds for every Newton map, attracting-critically-finite or not.
\end{proof}

\begin{remark}
The curve $\gamma$ that connects prepoles to $\infty$ can, in the attracting-critically-finite case, be chosen in a preferred way: in this case, all immediate basins $U_\xi$ have B\"ottcher coordinates, and prepoles on $\partial U_\xi$ can be connected to $\xi$ by internal rays. These internal rays also exist in all components of the basin of $\xi$ and provide the preferred connection for $\gamma$. For arbitrary Newton maps, the curves $\gamma$ still exist as required, but there is no such obvious choice of connections (one possibility for a preferred choice of $\gamma$ is to require that it intersects each component of $B_\xi$ in a curve that is a subset of a hyperbolic geodesic, but this is not invariant under the dynamics). 
\end{remark}

\section{The Newton Graph of a Newton Map}
\label{Sec_Graph}

In this section, we define abstract Newton graphs and use Theorem \ref{Thm_PreimagesConnected} to show that every postcritically fixed Newton map $f$ generates a unique abstract Newton graph in a natural way. From now on, we only work with postcritically fixed Newton maps; these are attracting-critically-finite by definition.

In this section, we construct finite graphs and, from these, postcritically finite branched covers that model the dynamics of postcritically fixed Newton maps. In Section~\ref{Sec_Thurston} we will then show, using Thurston theory, that these branched covers are actually realized as Newton maps of polynomials. The basic concepts of Thurston theory are reviewed in Section~\ref{Sec_Thurston}. In preparation for that section, we will use the concept of \emph{Thurston equivalence} several times here; a formal definition is given in Definition~\ref{Def:ThurstonEquivalence}.

\subsection{Extending Maps on Finite Graphs}
The channel diagram motivates the definition of a Newton graph. For this, we first need to introduce some notation regarding maps on embedded graphs and their extensions to $\S^2$, compare \cite[Chapter 6]{BFH}. We assume in the following that all graphs are embedded in $\S^2$.

\begin{definition}[Graph map, graph homeomorphism]
Let $\Gamma_1,\Gamma_2\subset\S^2$ be two finite graphs. We call a continuous map $g:\Gamma_1\to\Gamma_2$ a \emph{graph map} if it is injective on each edge of $\Gamma_1$, if forward and inverse images of vertices are vertices, and if the map is compatible with the embedding into $\Sphere$. If the graph map $g$ is a homeomorphism, then we call it a \emph{graph homeomorphism}. 
\end{definition}

The condition that $g$ is compatible with the embedding into $\Sphere$ is a local condition at every vertex $v$: if $g$ is locally injective at $v$, then $g$ should preserve the cyclic order of edges at $v$; if not, then the cyclic order of edges at $v$ and $g(v)$ should be compatible with a local covering of degree $\deg_vg$. 
We will use this definition in the case $\deg_vg>1$ only when the number of edges at $v$ equals $\deg_v g$ times the number of edges at $g(v)$ (compare Definition~\ref{Def_NewtonGraph} condition \eqref{Cond_Saturated}); in this case, the edges at $v$ are arranged in the same cyclic order as the image edges at $g(v)$, but repeated $\deg_vg$ times.

\begin{definition}[Regular extension]
\label{Def_GraphMap}
Let $g:\Gamma_1\to\Gamma_2$ be a graph map. An orientation-preserving branched covering map $\ol{g}:\S^2\to\S^2$ is called a \emph{regular extension} of $g$ if $\ol{g}|_{\Gamma_1}=g$ and $\ol{g}$ is injective on each component of $\S^2\setminus \Gamma_1$.
\end{definition}

Clearly, if a regular extension exists, it is unique up to homotopy of $\S^2$ relative to $\Gamma_1$; in particular, this will imply that it is unique up to Thurston equivalence.

For a vertex $v\in\Gamma_1$, we define the \emph{degree of $g$ at $v$}, denoted $\deg_v g$, as the maximal number of edges at $v$ that are mapped to the same image edge at $g(v)$. If $g$ has a regular extension $\ovl g$ and $\Gamma_1=\ovl g^{-1}(\Gamma_2)$, then $\deg_v g$ equals the local mapping degree of $\ovl g$ as a branched cover.

Now we discuss the existence of regular extensions.
Let $g:\Gamma_1\to\Gamma_2$ be a graph map. We may assume without loss of generality that each vertex $v$ of $\Gamma_1$ has a neighborhood $U_v\subset\S^2$ such that all edges of $\Gamma_1$ that enter $U_v$ terminate at $v$, and that these neighborhoods have disjoint closures for different vertices. We may also assume that for appropriate local coordinates $U_v$ is a round disk of radius $r$ centered at $v$ and that all edges entering $U_v$ are straight radii in $U_v$. We make analogous assumptions for $\Gamma_2$, with the same radius $r$. Finally, we may assume that $g|_{U_v}$ preserves lengths on the edges within $U_v$.  Then we can extend $g$ to each $U_v$ as in \cite{BFH}: for a vertex $v\in\Gamma_1$, let $\gamma_1$ and $\gamma_2$ be two adjacent edges ending there. In polar coordinates centered at $v$, these are straight lines with arguments, say, $\theta_1,\theta_2$ such that $0<\theta_2-\theta_1 \leqslant 2\pi$ (if $v$ is an endpoint of $\Gamma_1$, hence the endpoint of a single edge, then set $\theta_1=0$, $\theta_2=2\pi$). In the same way, choose arguments $\theta_1',\theta_2'$ for the image edges in $U_{g(v)}$ and extend $g$ to a (local) map $\tilde{g}$ on $\Gamma_1\cup\bigcup_v U_v$ by setting
\begin{equation}
\tilde g\colon(\rho,\theta)\mapsto \left(\rho, \frac{\theta_2'-\theta_1'}{\theta_2-\theta_1}\cdot(\theta-
\theta_1)+\theta'_1\right),
\label{Eq:LocalExtensions}
\end{equation}
where $(\rho,\theta)$ are polar coordinates in the sector bounded by the rays at $\theta_1$ and $\theta_2$. This way, sectors are mapped onto sectors in an orientation-preserving way. Then we have the following result which says that the only possible obstruction to the existence of a regular extension is that within some component of $\S^2\sm\Gamma_1$ several vertices have the same image. 

\begin{proposition}[Regular extension, {\cite[Proposition 6.4]{BFH}}]
\label{Prop_RegExt}
The map $g:\Gamma_1\to\Gamma_2$ has a regular extension if and only if for every vertex $y\in\Gamma_2$ and every component\/ $U$ of $\S^2\setminus\Gamma_1$, the local extension $\tilde{g}$ from \eqref{Eq:LocalExtensions} is injective on
\[
    \bigcup_{v\in g^{-1}(y)} U_v \cap U\;.
\]
If the regular extension exists, it has critical points only at the vertices of $\Gamma_1$.
\qed
\end{proposition}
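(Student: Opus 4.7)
My plan is to prove both implications by working with the local sector model $\tilde g$ near vertices and extending complementary component by component using a Schoenflies-type argument. For the forward direction, suppose $\bar g$ is a regular extension of $g$. Lemma~\ref{Lem_IsotopGraphMaps} lets me isotope $\bar g$, rel the vertices of $\Gamma_1$, so that its restriction to each $U_v$ is given by the sector formula defining $\tilde g$; this does not change the property of being a regular extension. If $\tilde g$ failed to be injective on $\bigcup_{v\in g^{-1}(y)} U_v\cap U$ for some vertex $y\in\Gamma_2$ and some component $U$ of $\S^2\setminus\Gamma_1$, two distinct points of $U$ arbitrarily close to vertices $v_1\ne v_2$ with $g(v_i)=y$ would have equal image under $\bar g$, contradicting the assumption that $\bar g$ is injective on $U$.

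For the converse, I assume the injectivity condition and construct $\bar g$ one complementary component at a time. Fix a component $U$ of $\S^2\setminus\Gamma_1$; its frontier $\partial U$ is a closed walk in $\Gamma_1$. Together with the sectoral action of $\tilde g$ on each $U_v\cap U$ with $v\in\partial U$, the map $\tilde g|_{\partial U}$ traces a closed walk in $\Gamma_2$. The key claim is that the injectivity hypothesis forces this walk to be the boundary of a single component $U'$ of $\S^2\setminus\Gamma_2$: if two vertices $v_1\ne v_2$ of $\partial U$ map to the same $y\in\Gamma_2$, the hypothesis says the sectors $U_{v_1}\cap U$ and $U_{v_2}\cap U$ are sent by $\tilde g$ to disjoint sectors at $y$, so the image walk has no self-tangency at $y$. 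It follows that $\tilde g(\partial U)$ is the boundary of a well-defined component $U'$ of $\S^2\setminus\Gamma_2$, and the boundary map has the correct orientation because all local sector maps preserve orientation. A standard Schoenflies extension produces an orientation-preserving homeomorphism $\bar g_U\colon \ol U\to \ol{U'}$ matching $\tilde g$ on $\partial U$ and on the local sector neighborhoods. Gluing the $\bar g_U$ along $\Gamma_1$ and the $U_v$'s yields a continuous orientation-preserving branched covering $\bar g\colon \S^2\to\S^2$ extending $g$, and by construction it is injective on each component of $\S^2\setminus\Gamma_1$, so it is a regular extension.

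The critical-point statement then drops out: away from the vertices of $\Gamma_1$, each edge interior is mapped injectively by the graph-map hypothesis and each open complementary component is mapped homeomorphically by $\bar g_U$, so $\bar g$ is a local homeomorphism off the vertex set. The main obstacle I anticipate is the verification that $\tilde g(\partial U)$ bounds a single component of $\S^2\setminus\Gamma_2$ with the correct multiplicity; this is exactly where the injectivity hypothesis is used, and it must be checked carefully when several vertices of $\partial U$ have a common image in $\Gamma_2$ or when $\partial U$ traverses the same edge of $\Gamma_1$ more than once, so that the resulting spherical degree on $U$ is exactly one.
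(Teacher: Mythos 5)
The paper cites this proposition from \cite{BFH} without giving a proof, so there is no internal argument to compare against; I am therefore assessing your argument on its own terms. Your forward implication is fine in substance, though the appeal to Lemma~\ref{Lem_IsotopGraphMaps} is misplaced: that lemma compares two regular extensions that are already given, whereas you only have one. All you need is the direct observation that $\ol{g}$ and $\tilde{g}$ send each sector of $U$ at a vertex $v$ into the same sector at $g(v)$, so that injectivity of $\ol{g}|_U$ immediately transfers to $\tilde{g}$ on $\bigcup_{v\in g^{-1}(y)}U_v\cap U$.

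The converse contains a genuine gap in the step you yourself flag as the main obstacle. It is \emph{not} true that the injectivity hypothesis forces $\tilde g(\partial U)$ to be the boundary of a single component of $\S^2\setminus\Gamma_2$. For example, let $\Gamma_1$ be a single arc with endpoints $v_1,v_2$ and $\Gamma_2$ a tripod with center $w$ and legs $e_1,e_2,e_3$, with $g$ sending the arc onto $e_1$, $v_1$ to the free endpoint of $e_1$ and $v_2$ to $w$. The hypothesis holds (vacuously at the two vertices outside the image of $g$), and any orientation-preserving homeomorphism of $\S^2$ extending $g$ is a regular extension; yet $\S^2\setminus\Gamma_2$ has exactly one component whose boundary walk uses all three legs, while $g(\partial U)$ traverses $e_1$ twice and bounds nothing of the kind. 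The point is that $\ol{g}(U)$ need not avoid $\Gamma_2$: here $\ol{g}(U)=\S^2\setminus\ol{e_1}$, which contains $e_2$ and $e_3$ in its interior, so ``no self-tangency at $y$'' does not deliver the conclusion you draw from it. What the Schoenflies argument actually needs is the statement that the curve obtained by pushing the boundary walk of $U$ a small distance into $U$ and applying $\tilde g$ (extended in the evident way over collar strips along edge interiors as well as over the sectoral neighborhoods $U_v$) is a \emph{simple} closed curve; it then bounds a Jordan domain that will in general properly contain edges of $\Gamma_2$. Proving simplicity of this pushed-off curve is precisely where the injectivity hypothesis and the edge-injectivity of $g$ enter, and you must also rule out sector collapse (two edges at $v$ in $\partial U$ mapping to the same edge at $g(v)$ on the same side), which the hypothesis does forbid. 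Once the claim is reformulated this way, your Schoenflies-and-glue scheme and the concluding observation about critical points do go through.
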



\subsection{The Newton Graph}
Let us first define an abstract channel diagram; it plays a fundamental role for the main concept of an abstract Newton graph.

\begin{definition}[Abstract channel diagram]
\label{Def_ChannelDiagram} An \emph{abstract channel diagram of degree $d \geqslant 3$} is a graph $\Deltahat \subset \S^2$ with exactly $d+1$ vertices $v_0,\dots,v_d$ and with edges $e_1,\dots,e_l$ so that the following properties are satisfied:
\begin{enumerate}
\item 
\label{I:AbsCh1}
$l \leqslant 2d-2$;
\item 
\label{I:AbsCh2}
each edge joins $v_0$ to some $v_i$ with $i>0$;
\item 
\label{I:AbsCh3}
each $v_i$ is connected to $v_0$ by at least one edge;
\item 
\label{necessaryCondition} 
if $e_i$ and $e_j$ both join $v_0$ to $v_k$, then each connected component of
$\S^2\setminus \ol{e_i\cup e_j}$ contains at least one vertex of $\Deltahat$.
\end{enumerate}
\end{definition}

We say that an abstract channel diagram $\Deltahat$ is \emph{realized by a Newton map with channel diagram ${\Delta}$} if there exists a graph homeomorphism $h:\Deltahat\to{\Delta}$ that preserves the cyclic order of edges at each vertex (this cyclic order is well defined because $\Deltahat$ is embedded in $\S^2$).

\begin{lemma}[Abstract channel diagram]
\label{Lem:AbstractChannelDiagram}
The channel diagram ${\Delta}$ of the Newton map $f$ constructed in Section \ref{Sec_Models} is an abstract channel diagram.
\end{lemma}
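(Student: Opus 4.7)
The plan is to verify the four conditions of Definition \ref{Def_ChannelDiagram} for the channel diagram $\hat{\Delta}$ of the Newton map $f$. Its vertices are $v_0:=\infty$ together with the finite attracting fixed points $\xi_1,\dots,\xi_d$, and its edges are the fixed internal rays $\Gamma_{\xi_i}^j\subset U_{\xi_i}$ produced by the B\"ottcher coordinates. Properties (2) and (3) are immediate: by construction each edge is a curve from some $\xi_i$ to $\infty$, and Proposition \ref{Prop_Access} guarantees that every $U_{\xi_i}$ contributes at least one fixed ray. Property (1) follows from the same proposition combined with Riemann--Hurwitz: $U_{\xi_i}$ has as many accesses as critical points (counted with multiplicity), so the total edge count equals the number of critical points of $f$ lying in immediate basins, which is at most $2d-2$.

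The only nontrivial condition is (4). Given two distinct edges $e_i,e_j$ joining $\infty$ to the same $\xi_k$, these are fixed internal rays in $U_{\xi_k}$, so $J:=\ol{e_i}\cup\ol{e_j}$ is a Jordan curve contained in $\ol{U}_{\xi_k}\cup\{\infty\}$ that separates $\S^2$ into two open disks $A_1,A_2$. I plan to exhibit a finite fixed point of $f$ distinct from $\xi_k$ in each $A_m$. Using the Blaschke product $\hat g$ associated to $f|_{U_{\xi_k}}$ extended by reflection to $\S^2$, as in the proof of Corollary \ref{Cor_SharedPoles}, the fixed points of $\hat g$ on $\S^1$ correspond to the accesses of $U_{\xi_k}$; the two corresponding to $e_i$ and $e_j$ cut $\S^1$ into two arcs, and the argument recalled there shows that each such arc contains a pre-fixed point of $\hat g$. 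These pre-fixed points translate, via the B\"ottcher map for $U_{\xi_k}$, to poles of $f$ on $\partial U_{\xi_k}$, one on each side of $J$. For such a pole $p$ lying in the $A_m$-side, the component $W$ of $\C\setminus U_{\xi_k}$ containing $p$ must be contained in $A_m$, because $J\setminus\{\xi_k,\infty\}\subset U_{\xi_k}$ and hence no connected subset of $\C\setminus U_{\xi_k}$ can cross $J$. Applying Theorem \ref{Thm_FixedPoles} to $W$ then produces a finite fixed point of $f$ inside $W\subset A_m$, necessarily distinct from $\xi_k$.

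I expect property (4) to be the only real step. Its main subtlety is confining the auxiliary component $W$ of $\C\setminus U_{\xi_k}$ to a single $A_m$, because only then does the equality of pole count and fixed-point count supplied by Theorem \ref{Thm_FixedPoles} deliver a fixed point on the prescribed side of $J$; this confinement rests entirely on the observation that the interior of $J$ is contained in $U_{\xi_k}$ and therefore avoids $\C\setminus U_{\xi_k}$.
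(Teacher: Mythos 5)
Your proof is correct and essentially follows the paper's approach. The paper disposes of condition~(4) in a single sentence by citing \cite[Corollary~5.2]{RS} (pointing as well to Theorem~\ref{Thm_FixedPoles}); you unpack the argument by explicitly producing, via the Blaschke-product computation of Corollary~\ref{Cor_SharedPoles}, a pole and hence a complementary component of $U_{\xi_k}$ on each side of the Jordan curve $\ol{e_i\cup e_j}$ before applying Theorem~\ref{Thm_FixedPoles}, a confinement step the paper leaves implicit.
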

\begin{proof}
The channel diagram $\Delta$ of a Newton map is constructed as the union of all fixed internal rays within immediate basins (see Section~\ref{Sec_Models} and in particular equation \eqref{Eq:DefChannelDiagram}); these represent accesses to $\infty$ (channels). Therefore, with $v_0$ standing for $\infty$ and the rest of $v_i$ standing for the finite fixed points, properties (\ref{I:AbsCh2}) and (\ref{I:AbsCh3}) of Definition \ref{Def_ChannelDiagram} are immediate. Since for every immediate basin the number of channels equals the number of critical points it contains, the total number of channels equals the number of critical points in all the immediate basins combined, which implies (\ref{I:AbsCh1}). 

Finally, ${\Delta}$ satisfies (\ref{necessaryCondition}) because for every immediate basin $U_{\xi}$ of $f$, every component of $\C\setminus U_\xi$ contains at least one fixed point of $f$ \cite[Corollary~5.2]{RS} (see also Theorem \ref{Thm_FixedPoles}).
\end{proof}

With these preparations, we are ready to introduce the concept of an abstract Newton graph. It turns out that it carries enough information to uniquely characterize postcritically fixed Newton maps.

\begin{definition}[Abstract Newton graph]
\label{Def_NewtonGraph}
Let $\Gamma\subset\S^2$ be a finite connected graph, $\Gamma'$ the set of its vertices and $g:\Gamma\to\Gamma$ a graph map. The pair $(\Gamma,g)$ is called
an \emph{abstract Newton graph} if it satisfies the following conditions:
\begin{enumerate}
\item 
\label{Cond_Diagram} There exists $d_{\Gamma}\geqslant 3$ and an abstract channel diagram $\Deltahat\subsetneq\Gamma$ of degree $d_\Gamma$ such that
$g$ fixes each vertex and each edge of $\Deltahat$.
\item  
\label{Cond_Branch} If $v_0,\dots,v_{d_\Gamma}$ are the vertices of $\Deltahat$, then $v_i\in \ol{\Gamma\setminus\Deltahat}$ if and only if $i\neq 0$.
Moreover, there are exactly $\deg_{v_i} g - 1 \geqslant 1$ edges in $\Deltahat$ that connect $v_i$ to $v_0$ for $i\neq 0$. We call $v_0$ the \emph{distinguished vertex of $\Gamma$}.
\item 
\label{Cond_Degree} $\sum_{v \in\Gamma'} \left(\deg_v g - 1\right) = 2d_\Gamma - 2$.
\item 
\label{Cond_Enter} There is an $n\in\N$ such that $g^{\circ (n-1)}(v)\in\Deltahat$ for all $v\in\Gamma'$ with $\deg_v g>1$. Let $N_\Gamma$ be the least such $n$. 
\item

\label{Cond_Preimages}		
Every vertex $v$ and every edge $e$ of $\Gamma$ satisfy $g^{\circ N_\Gamma}(v)\in\hat\Delta$ and $g^{\circ N_\Gamma}(e)\subset\hat\Delta$.	

\item
\label{Cond_Saturated}
For every $v\in\Gamma'\sm\Deltahat$ with  $g^{\circ (N_\Gamma-1)}(v)\in\Deltahat$, the number of adjacent edges in $\Gamma$ equals $\deg_v g$ times the number of edges adjacent to $g(v)$. 
\item 
\label{Cond_Connected} The graph $\ol{\Gamma\setminus\Deltahat}$ is connected.
\item 
\label{Cond_Extension} For every vertex $y\in\Gamma'$ and every component $U$ of $\S^2\setminus\Gamma$, the local extension $\tilde{g}$ from \eqref{Eq:LocalExtensions} is injective on 
\( 
\bigcup_{v\in g^{-1}(y)} U_v \cap U\;.
\)
\end{enumerate}
\end{definition}

Conditions~\eqref{Cond_Diagram} and \eqref{Cond_Branch} make sure that the abstract Newton graph contains an abstract channel diagram with enough edges connecting the finite fixed points to $\infty$, and condition~\eqref{Cond_Degree} guarantees that there are enough critical points. Conditions~\eqref{Cond_Enter} and \eqref{Cond_Preimages} require that all vertices and edges are preimages of the abstract channel diagram and prescribe the allowed number of iterations. Condition~\eqref{Cond_Saturated} is a saturation condition: every vertex up to generation $N_\Gamma-1$ has enough edges attached (this cannot be true for generation $N_\Gamma$ as well, or we would obtain vertices of generation $N_\Gamma+1$ and thus an infinite graph). Condition~\eqref{Cond_Connected} is a non-obvious but important property of Newton graphs of actual Newton maps. Finally, condition~\eqref{Cond_Extension} in combination with Proposition~\ref{Prop_RegExt} guarantees the existence of a regular extension of $g$, which is a branched covering map $\ol{g}:\S^2\to\S^2$.
Condition (\ref{Cond_Degree}) and the Riemann-Hurwitz formula ensure
that $\ol{g}$ has degree $d_{\Gamma}$.

Another way to characterize $\Gamma$ is to observe that $\Gamma$ is the component of $\ovl g^{-N_\Gamma}(\Deltahat)$ that contains $\Deltahat$ (see the proof of Theorem~\ref{Thm_NewtonGraph} below). 

\goodbreak

\begin{definition}[Equivalent Newton graphs]
We say that two abstract Newton graphs $(\Gamma_1,g_1)$ and $(\Gamma_2,g_2)$ are \emph{equivalent} if there exists a graph homeomorphism $h\colon\Gamma_1\to\Gamma_2$ that preserves the cyclic order of edges at each vertex of $\Gamma_1$ and that sends the distinguished vertex of $\Gamma_1$ to the distinguished vertex of $\Gamma_2$. 
\end{definition}

Note that equivalence of Newton graphs is a priori not a dynamical condition. This is addressed in the following lemma. 

\begin{lemma}[Equivalent Newton graphs]
\label{Lem:EquivalentGraphs}
If $(\Gamma_1,g_1)$ and $(\Gamma_2,g_2)$ are equivalent Newton graphs, then every graph homeomorphism $h\colon \Gamma_1\to\Gamma_2$ realizing this equivalence has the property that $g_2\circ h=h\circ g_1$ on the vertex set of $\Gamma_1$, and so that $g_2\circ h$ is homotopic to $h\circ g_1$ on $\Gamma_1$ relative to the vertices.

In this case, the regular extensions $\ovl g_1$ and $\ovl g_2$ are Thurston equivalent. 
\end{lemma}
\begin{proof}
By hypothesis, there exists a graph homeomorphism $h\colon \Gamma_1\to\Gamma_2$ that preserves vertices and the cyclic order of edges at all vertices, and that respects the distinguished vertices of $\Gamma_1$ and $\Gamma_2$. We will show that, up to homotopy along the edges relative to the vertices, we have 
\begin{equation}
g_2\circ h=h\circ g_1
\label{Eq_GraphEquivalence}\;.
\end{equation}

Let $\Deltahat_1\subset\Gamma_1$ and $\Deltahat_2=h(\Deltahat_1)\subset\Gamma_2$ be the abstract channel diagrams. These consist exactly of the distinguished vertices of $\Gamma_1$ and $\Gamma_2$ together with all adjacent edges, the endpoints of which are fixed points of $g_1$ resp.\ $g_2$. This implies \eqref{Eq_GraphEquivalence} on all vertices of $\hat\Delta_1$, and up to isotopy relative to the vertices on all edges.

Now let $\hat\Gamma_1\subset\Gamma_1$ be the subgraph on which \eqref{Eq_GraphEquivalence} holds for all vertices, and on all edges up to isotopy relative to the vertices. Then $\hat\Delta_1\subset\hat\Gamma_1$. If \eqref{Eq_GraphEquivalence} holds on any edge, then also at its adjacent two vertices. It thus suffices to prove that if it holds at some vertex $v$, then also at all adjacent edges. Moreover, we may by induction assume that either $v\in\hat\Delta_1$ or that \eqref{Eq_GraphEquivalence} already holds at $g_1(v)$. The inclusion $\Deltahat_1 \subset \hat \Gamma_1$ allows us to start this induction, and condition (\ref{Cond_Preimages}) allows us to run it.

If $v\in\Deltahat_1$, then either $v$ is the distinguished vertex and \eqref{Eq_GraphEquivalence} is obvious because $h$ must respect the channel diagram, or $v$ is not distinguished and all edges at $v$ and $h(v)$ are iterated preimages of edges at $v$ and $h(v)$ in $\Deltahat_1$ and $\Deltahat_2$, respectively, and the claim follows by induction on the number of preimages. Finally, if \eqref{Eq_GraphEquivalence} holds at $g_1(v)$, then condition~\eqref{Cond_Saturated} determines the number of edges at $v$ and $h(v)$, and this determines $h$ at all edges at $v$ because $h$ must preserve the cyclic order.

It remains to show the claim on Thurston equivalence.
Both graph maps $g_1$ and $g_2$ have regular extensions $\ovl g_1$ and $\ovl g_2$ (see the remark after Definition~\ref{Def_NewtonGraph}), and these extensions are compatible with the homotopy from $g_2\circ h$ to $h\circ g_1$ on $\Gamma_1$ relative to the vertices: every component of $\S^2\sm\Gamma_1$ and of $\S^2\sm\Gamma_2$ is a topological disk, and the homotopies on their boundaries extend to the disks by the Alexander trick. This means that $\ovl g_1$ and $\ovl g_2$ are Thurston equivalent (note that the existence of a homotopy is equivalent to the existence of an isotopy; see for instance \cite[Appendix~C.3]{Hubbard2}).
\end{proof}

Now we are ready to prove the first of our two theorems on the classification of postcritically fixed Newton maps, saying that every postcritically fixed Newton map gives rise to an abstract Newton graph. Recall that for a Newton map $f$ with channel diagram $\Delta$, we denote by $\Delta_n$ the component of $f^{-n}(\Delta)$ that contains $\Delta$.

\begin{proof}[Proof of Theorem \ref{Thm_NewtonGraph}]
Let $f$ be a postcritically fixed  Newton map with channel diagram $\Delta$. First observe that $\Delta$ connects every fixed point of $f$ to $\infty$. Since $f$ is postcritically fixed, each critical point of $f$ is either a prepole or an iterated preimage of a root, so each critical point that is not a prepole is connected to a prepole by an iterated preimage of a single edge in $\Delta$. Since every prepole is in some $\Delta_n$ by Corollary~\ref{Cor:PrepolesConnect}, there exists  a minimal $N\in\N$
such that $\Delta_{N-1}$ contains all critical points of $f$. We prove that $(\Delta_N,f)$ is an abstract Newton graph in the sense of Definition \ref{Def_NewtonGraph}.

Most conditions are easily seen to be satisfied. To begin with, condition (\ref{Cond_Diagram}) is Lemma~\ref{Lem:AbstractChannelDiagram}. Condition (\ref{Cond_Branch}) is also immediate: since $\infty$ is a repelling fixed point and thus has local mapping degree $1$, all edges in every $\Delta_n$ that terminate at $\infty$ are already in $\Delta$; since all other vertices of $\Delta$ are critical fixed points, they have more edges in every $\Delta_n$ with $n\ge 1$ than in $\Delta=\Delta_0$. The number of accesses to $\infty$ of every fixed point $\xi\in\C$ is one less than the degree of $f$ on $U_\xi$, and since $f$ is postcritically fixed, this degree is the local degree of $f$ at $\xi$. 

Since each $\Delta_{n+1}$ is a component of $f^{-1}(\Delta_n)$, it follows that for each vertex $v\in\Delta_n$ the local degree $\deg_v f$ in the graph map $f \colon \Delta_N \to \Delta_N$ equals the degree of $f$ as a map of $\S^2$. But $N$ was chosen such that $\Delta_{N-1}$ contains all critical points. Therefore condition (\ref{Cond_Degree}) follows from the Riemann--Hurwitz formula. 

Conditions \eqref{Cond_Enter} and \eqref{Cond_Preimages} are built into the definition of $\Delta_N$ with $N=N_\Gamma$, and the same holds for \eqref{Cond_Saturated}. 
Condition (\ref{Cond_Extension}) follows from the fact that all critical points of $f$ are in $\Delta_{N}$, and since all complementary components are simply connected, they are mapped forward injectively. 

Finally, Condition (\ref{Cond_Connected}) is implied by the following lemma that we prove below.

\begin{lemma}[Newton graph connected in $\C$]
\label{Lem:GraphConnectedInC}
For every postcritically fixed Newton map and $n$ large enough so that $\Delta_n$ contains all critical points, the graph $\ovl{\Delta_{n'}\sm\Delta}$ is connected for all $n'>n$. 
\end{lemma}

Once we prove this lemma (below), this shows that every postcritically fixed Newton map gives rise to an abstract Newton graph as in Definition~\ref{Def_NewtonGraph}.

The final claim of the theorem is that two postcritically fixed Newton maps have equivalent Newton graphs if and only if these maps are affinely conjugate. To see this, suppose that $f_1$ and $f_2$ are two postcritically fixed Newton maps with equivalent Newton graphs $(\Delta_{1,N},f_1)$ and $(\Delta_{2,N},f_2)$. By Lemma~\ref{Lem:EquivalentGraphs}, this implies that $f_1$ and $f_2$ are Thurston equivalent, and by Thurston uniqueness  (see \cite{DH} or Theorem \ref{Thm_Thurston} below), $f_1$ and $f_2$ are conjugate by a M\"obius transformation that fixes $\infty$, so they are affinely conjugate.

The converse claim that affinely equivalent Newton maps have equivalent graph maps is obvious. 
\end{proof}

Note that if
all critical points are in $\Delta_{N-1}$, we need to pull back one more time to ensure that all critical points have enough branches (condition~\eqref{Cond_Saturated}), and also that $\ovl{\Delta_N\sm\Delta}$ is connected (condition \eqref{Cond_Connected}). The latter is a consequence of Lemma~\ref{Lem:GraphConnectedInC} which we will now prove.

\begin{proof}[Proof of Lemma~\ref{Lem:GraphConnectedInC}]
Suppose by way of contradiction that
the bounded set $\ol{\Delta_{N-1}\setminus\Delta}$ is not connected, i.e.\ that $\infty$ disconnects $\Delta_{N-1}$.
Then there exists an unbounded component $V$ of
$\C\setminus\Delta_{N-1}$ that separates the plane, i.e.\ $V$ has
at least two accesses to $\infty$. Let $W$ be a neighborhood of
$\infty$ that is a round disk in linearizing coordinates and
satisfies $W\cap \Delta_{N-1}\subset \Delta$. Let $V_1,\dots,V_k$ be
the unbounded components of $V\cap W$; we have $k\ge 2$ by hypothesis (it may well be that all components of $V\cap W$ are unbounded). Then $f$ acts injectively on each
$V_i$ and there exists a branch $g_i$ of $f^{-1}$ that maps $V_i$
into itself (recall that $\infty$ is a repelling fixed point of $f$,
so it is attracting for the $g_i$). By construction, $V$ is simply
connected and contains no critical values of $f$, so the $g_i$
extend to all of $V$ by analytic continuation. Since
$\Delta_{N-1}\subset\Delta_{N}$, we obtain $g_i(V)\subset V$ for all
$i$. If there are $i\neq j$ such that $g_i(V)\cap
g_j(V)\neq\emptyset$, then it follows that $g_i=g_j$ because $g_i$ and $g_j$ are two inverse branches of $f$, but this is impossible (we have a holomorphic self-map of $V$ for which $\infty\in \partial V$
is attracting through two distinct accesses, and this contradicts for instance the Denjoy-Wolff theorem \cite[Theorem 5.4]{Milnor} or simply contraction of the hyperbolic metric).

Therefore, $g_i(V)$ are pairwise disjoint and thus all have exactly one access to $\infty$. If $w\in \partial
g_i(V)$ for some $i$, then $f(w)\in\partial V\subset \Delta_{N-1}$, for otherwise the
map $g_i$ would be defined in a neighborhood of $f(w)$. Hence $w\in
f^{-1}(\Delta_{N-1})$; since all $g_i(V)$ are open disks and have connected boundary, we have $w\in \Delta_N$: this means that for every access of $V$ to $\infty$, the two adjacent edges can be connected through $\Delta_N\cap\C$, so no component of
$\Cc\setminus \Delta_N$ has more than one access to $\infty$, and
$\ol{\Delta_N\setminus\Delta}$ is connected.
\end{proof}

\section{Abstract Newton Graphs Are Realized}
\label{Sec_Thurston}

In this section, we prove that every abstract Newton graph is realized by a postcritically fixed Newton map that is unique up to affine conjugation. This will be accomplished with the help of Thurston's fundamental characterization theorem of rational maps. We start by reviewing  some fundamental notions of Thurston theory, including the helpful notion of arc systems by Kevin Pilgrim and Tan Lei.

\subsection{Thurston's Criterion For Marked Branched Coverings}

Thurston's theorem provides a necessary and sufficient condition for the existence of a rational map with certain combinatorial behavior in terms of combinatorial mapping properties of (potentially very large) collections of simple closed curves.

The notations and results in this section are based on \cite{DH} and \cite{PT}.
Before we can state Thurston's criterion, we need several definitions. Recall that $P_g$ stands for the postcritical set of a branched covering $g:\S^2\to\S^2$.

\begin{definition}[Marked branched covering; \cite{PT}]
A \emph{marked branched covering} is a pair $(g,X)$, where $g:\S^2\to\S^2$ is a postcritically finite branched covering map and $X$ is a finite set containing $P_g$ such that $g(X)\subset X$.
\end{definition}

\begin{definition}[Thurston equivalence]
\label{Def:ThurstonEquivalence}
Let $(g,X)$ and $(h,Y)$ be two marked branched coverings. We say that they are \emph{Thurston equivalent} if there are two  orientation-preserving homeomorphisms $\phi_0,\phi_1:\S^2\to\S^2$ such that
\[
    \phi_0\circ g = h\circ \phi_1,
\]
and there exists an isotopy $\Phi: [0,1]\times\S^2\to\S^2$ with $\Phi(0,.)=\phi_0$ and $\Phi(1,.)=\phi_1$ such that $\Phi(t,.)|_{X}$ is constant in $t\in [0,1]$ with $\Phi(t,X)=Y$.
\end{definition}

\begin{definition}[Multicurve]
Let $(g,X)$ be a marked branched covering. By a \emph{simple closed curve in $(\S^2,X)$} we mean a simple closed curve $\gamma\subset\S^2\sm X$. It is called \emph{essential} if both components of $\S^2\setminus\gamma$ contain at least two points of $X$, and  \emph{peripheral} otherwise.

Two simple closed curves $\gamma_0,\gamma_1$ in $(\S^2,X)$ are called \emph{isotopic (relative $X$)} (write $\gamma_0\simeq\gamma_1$) if there exists a continuous one-parameter family $\gamma_t$ (with $t\in [0,1]$) of curves in $(\Sphere, X)$ joining $\gamma_0$ to $\gamma_1$. We denote the isotopy class of $\gamma_0$ by $[\gamma_0]$.

A finite set $\Pi=\{\gamma_1,\dots,\gamma_m\}$ of disjoint, essential and pairwise non-isotopic simple closed curves in $(\S^2,X)$ is called a \emph{multicurve}.
\end{definition}

If $(g,X)$ is a marked branched covering and $\gamma$ is a simple closed curve in $\S^2\setminus X$, then the set $g^{-1}(\gamma)$ is a disjoint union of simple closed curves. 

\begin{definition}[Irreducible Thurston (multicurve) obstruction]
\label{Def:IrThObs}
Let $(g,X)$ be a marked bran\-ch\-ed covering and $\Pi$ a multicurve. Denote by $\R^\Pi$ the real vector space spanned by the isotopy classes of the curves in $\Pi$. Then we associate to $\Pi$ its \emph{Thurston transformation} $g_\Pi:\R^\Pi\to\R^\Pi$ by specifying its action on representatives $\gamma\in\Pi$ of basis elements:
\begin{equation}
\label{Eqn_ThurstonTransform}
    g_{\Pi}(\gamma) := \sum_{\gamma'\in g^{-1}(\gamma)} \frac{1}{\deg(g|_{\gamma'}:\gamma'\to\gamma)}[\gamma']
\end{equation}
where the sum is taken over all preimage components $\gamma'$ of $\gamma$ that are essential curves in $(\S^2,X)$ and isotopic to one of the curves in $\Pi$; the sum is zero if there are no such components. 

Note that in Definition~\ref{Def:IrThObs} we do not assume that $\Pi$ is invariant; one can always extend to the invariant case, see \cite[Remark~2 after Theorem 3.1]{PT}. Note also that it is quite possible for to different $\gamma_1,\gamma_2\in\Pi$ (which are non-homotopic by definition) to have essential preimage curves $\gamma'_1\in g^{-1}(\gamma_1)$, $\gamma'_2\in g^{-1}(\gamma_2)$ that are homotopic, i.e.\ with $[\gamma'_1]=[\gamma'_2]$.

The linear transformation $g_\Pi$ given by (\ref{Eqn_ThurstonTransform}) is represented by a square matrix with non-negative entries. Therefore its largest eigenvalue $\lambda(\Pi)$ is real and non-negative by the Perron-Frobenius theorem.  A multicurve $\Pi$ is called a \emph{Thurston obstruction} if $\lambda(\Pi)\ge 1$; we prefer to use the term \emph{multicurve obstruction}.

We will focus on irreducible obstructions; these are defined as follows. 
A square matrix $A$ with non-negative entries is called \emph{irreducible} if for each pair $(i,j)$ there exists $k \geqslant 0$ such that the $(i,j)$ entry of $A^k$ is positive. We say that $\Pi$ is an \textit{irreducible multicurve} if the matrix representing $g_\Pi$ is. 
Finally, an irreducible multicurve $\Pi$ is called an \emph{irreducible multicurve obstruction} if $\lambda(\Pi)\geqslant 1$. 
\end{definition}

The statement of Thurston's theorem contains the notion of a hyperbolic orbifold. For our Newton maps the orbifolds are always hyperbolic, so we refrain from giving a precise definition (which is given in \cite{DH}); suffice it to say that whenever a branched cover has at least three critical fixed points, it has hyperbolic orbifold, and this is the case for postcritically finite Newton maps of degree at least $3$, hence with at least $3$ roots.

Now we are ready to state Thurston's theorem for marked branched coverings as given in \cite[Theorem 3.1]{PT} and proved in \cite{DH}.

\begin{theorem}[Marked Thurston theorem]
\label{Thm_Thurston}
Let $(g,X)$ be a marked branched covering with hyperbolic orbifold. It is Thurston equivalent to a marked rational map $(f,Y)$ if and only if it does not have an irreducible multicurve obstruction, that is if and only if $\lambda(\Pi)<1$  for every irreducible multicurve $\Pi$. In this case, the rational map $f$ is unique up to M\"obius  conjugation.
\qed
\end{theorem}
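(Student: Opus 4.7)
The plan is to follow the Teichm\"uller-theoretic approach of Douady and Hubbard. Let $\mathcal{T}_X$ denote the Teichm\"uller space of complex structures on $\S^2$ marked by $X$, modulo isotopy relative to $X$; it is a finite-dimensional contractible complex manifold equipped with the Teichm\"uller (equivalently Kobayashi) metric $d_{\mathcal{T}}$. The marked branched covering $(g,X)$ induces a holomorphic pull-back map $\sigma_g : \mathcal{T}_X \to \mathcal{T}_X$ by sending a complex structure $\tau$ on the target to the pull-back $g^*\tau$ on the source, marked again via the data of $X$. The guiding principle, easily verified, is that $(g,X)$ is Thurston equivalent to a marked rational map if and only if $\sigma_g$ has a fixed point: at a fixed point, $g$ becomes holomorphic in the uniformizing coordinates, and the resulting self-map of $\S^2 \cong \Cc$ is rational.

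For the ``only if'' direction, assume $(g,X)$ is equivalent to a rational $(f,Y)$ and suppose, toward a contradiction, that an irreducible multicurve $\Gamma$ with $\lambda(\Gamma) \geq 1$ exists. Represent each $\gamma \in \Gamma$ by an embedded annulus $A_\gamma \subset \Cc \setminus Y$ of modulus $m(\gamma)$. Each annular component $A_{\gamma'}$ of $f^{-1}(A_\gamma)$ has modulus $m(\gamma)/\deg(f|_{A_{\gamma'}})$, so summing over preimages isotopic to curves in $\Gamma$ and applying the Gr\"otzsch inequality yields the coordinatewise bound $m^{(n+1)} \geq (g_\Gamma)^{T} m^{(n)}$ in $\R^\Gamma$. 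Perron-Frobenius then forces some coordinate of $m^{(n)}$ to grow at least like $\lambda(\Gamma)^n$, contradicting the uniform upper bound on moduli of disjoint, non-peripheral, pairwise non-isotopic annuli on a fixed sphere minus a finite set.

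The substantive content lies in the ``if'' direction, which requires producing a fixed point of $\sigma_g$. A standard computation, using that holomorphic maps decrease the Kobayashi metric, shows $\sigma_g$ is weakly contracting on $\mathcal{T}_X$; the hyperbolic-orbifold hypothesis rules out the degenerate case (essentially the Latt\`es examples) in which $\sigma_g$ could act as a genuine isometry. It then remains to show that some orbit $\{\sigma_g^{\circ n}(\tau_0)\}$ is precompact in $\mathcal{T}_X$; a Denjoy--Wolff-type argument will then extract a fixed point. By Mumford's compactness criterion, failure of precompactness is equivalent to the presence of simple closed geodesics on the underlying punctured sphere whose hyperbolic lengths along the orbit tend to zero. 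The hardest step, and the technical heart of the proof, is to extract from such a degenerating sequence an invariant multicurve $\Gamma^\ast$ whose Thurston transformation has spectral radius at least $1$; passing to an irreducible invariant block then produces a genuine irreducible obstruction, contradicting the hypothesis.

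Finally, uniqueness of the realizing $f$ up to M\"obius conjugation follows from the same contraction analysis: two rational realizations $f_1,f_2$ give rise to two fixed points of $\sigma_g$, and weak contraction combined with the hyperbolic-orbifold hypothesis forces these fixed points to coincide in $\mathcal{T}_X$, which translates on the target sphere to a M\"obius conjugacy between $f_1$ and $f_2$ on $\Cc$. The anticipated main obstacle is the compactness/obstruction extraction step in the ``if'' direction, where the combinatorial structure of Thurston obstructions must be read off from a purely analytic degeneration in Teichm\"uller space.
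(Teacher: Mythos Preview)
The paper does not prove this theorem: the \qed follows the statement directly, and the sentence introducing it attributes the result to \cite[Theorem~3.1]{PT} with proof in \cite{DH}. Your outline is a faithful sketch of exactly the Douady--Hubbard argument the paper cites, so there is no divergence to report --- you have simply filled in what the paper leaves to the literature.
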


\subsection{Arcs Intersecting Obstructions}

We present a theorem of Ke\-vin Pil\-grim and Tan Lei \cite{PT} that is useful to show that certain marked branched coverings do not have Thurston obstructions, so they are equivalent to rational maps. Again, we first need to introduce some notation.

Let $(g,X)$ be a marked branched covering of degree $d \geqslant 3$.
\begin{definition}[Arc system]
An \emph{arc in $(\S^2,X)$} is a map $\alpha:[0,1]\to \S^2$ such
that $\alpha(\{0,1\})\subset X$ and $\alpha((0,1))\cap X=\emptyset$ and so that $\alpha$ is a continuous mapping that is injective on $(0,1)$. 
For two arcs $\alpha_1$ and $\alpha_2$ we write $\alpha_1\simeq \alpha_2$ if they are isotopic relative $X$.

A set of pairwise non-isotopic arcs in $(\S^2,X)$ is called an \emph{arc system}. Two arc systems $\Lambda,\Lambda'$ are \emph{isotopic} if each curve in $\Lambda$ is isotopic relative $X$ to a
unique element of $\Lambda'$ and vice versa.
\end{definition}

Note that arcs connect marked points (the endpoints of an arc need not be distinct) while
simple closed curves run around them. The resulting
intersections will give us some control over the
location of possible multicurve obstructions. Since arcs and curves are only
defined up to isotopy, we make precise what we mean by intersecting arcs and
curves.

\begin{definition}[Intersection number]
\label{Def_IntersectionNumber}
Let $\alpha$ and $\beta$ each be an arc or a
simple closed curve in $(\S^2,X)$. Their \emph{intersection
number} is
\[
    \alpha\cdot\beta := \min_{\alpha'\simeq\alpha,
    \,\beta'\simeq\beta} \#((\alpha'\cap\beta')\setminus X).
\]
The intersection number extends bilinearly to arc systems and multicurves.
\end{definition}

If $\alpha$ is an arc in $(\S^2,X)$, then the closure of a component of $g^{-1}(\alpha\sm X)$ is called a \emph{lift}
of $\alpha$. Each arc clearly has $d$ distinct lifts. If $\Lambda$ is an arc system, an arc system $\tilde{\Lambda}$ is
called a \emph{lift} of $\Lambda$ if each $\tilde{\alpha}\in\tilde{\Lambda}$ is a lift of some
$\alpha\in\Lambda$. 

If $\Lambda$ is an arc system, we introduce a linear map $g_{\Lambda}$ on the real vector space $\R^{\Lambda}$ similarly as for multicurves: for $\alpha\in\Lambda$, set
\[
    g_{\Lambda}(\alpha):=\sum_{\alpha' \in g^{-1}(\alpha)} [\alpha']\;,
\]
where $[\alpha']$ denotes the isotopy class of $\alpha'$ relative $X$ and the sum is again over all $\alpha' \in g^{-1}(\alpha)$ that are in $\Lambda$. Again, the sum is assumed to be zero if $\alpha$ has no preimages that are isotopic to elements of $\Lambda$. We say that $\Lambda$ is \emph{irreducible} if the matrix representing $g_{\Lambda}$ is.

Denote by $\tilde{\Lambda}(g^{\circ n})$ the union of those components of $g^{-n}(\Lambda)$ that are isotopic to elements of $\Lambda$ relative $X$, and define $\tilde{\Pi}(g^{\circ n})$ analogously. Note that if $\Lambda$ is irreducible, each
element of $\Lambda$ is isotopic to an element of $\tilde{\Lambda}(g^{\circ n})$ for some $n$.

The following theorem is \cite[Theorem~3.2]{PT}. It shows that, up to isotopy, irreducible multicurve obstructions cannot intersect the preimages of irreducible arc systems, except possibly the arc systems themselves. We will use this theorem to show that the extended map of every abstract Newton graph is Thurston equivalent to a rational map.

\begin{theorem}[Arcs intersecting obstructions] 
\label{Thm_ArcsInterOb} 
Let $(g,X)$ be a mar\-ked branched covering, $\Pi$ be an irreducible multicurve obstruction and $\Lambda$ be an
irreducible arc system. Suppose furthermore that $\#(\Pi\cap\Lambda)=\Pi\cdot\Lambda$. Then exactly one of
the following is true:
\begin{enumerate}
\item 
$\Pi\cdot\Lambda=0$ and $\Pi\cdot g^{-n}(\Lambda)=0$ for all $n \geqslant 1$.
\item 
$\Pi\cdot\Lambda\neq 0$ and for each $n \geqslant 1$ each component of $\Pi$ is isotopic to a unique
component of $\tilde{\Pi}(g^{\circ n})$. The mapping $g^{\circ n} \colon \tilde{\Pi}(g^{\circ n})\to \Pi$ is a homeomorphism and
$\tilde{\Pi}(g^{\circ n})\cap \left(g^{-n}(\Lambda)\setminus
\tilde{\Lambda}(g^{\circ n})\right)=\emptyset$. 
\end{enumerate}
The same is true when interchanging the roles of $\Pi$ and $\Lambda$. \qed
\end{theorem}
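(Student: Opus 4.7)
The plan is a Perron--Frobenius argument on the Thurston transfer matrices for $\Gamma$ and $\Lambda$, driven by the combinatorics of how transverse intersections in minimal position lift under $g$. First, use the hypothesis $\#(\Gamma\cap\Lambda)=\Gamma\cdot\Lambda$ to fix representatives realizing minimal position. Enumerate $\Gamma=\{\gamma_1,\dots,\gamma_m\}$ and $\Lambda=\{\lambda_1,\dots,\lambda_k\}$, and form the intersection matrix $I_{ij}=\gamma_i\cdot\lambda_j$, the Thurston transfer matrix $M$ of $\Gamma$ (whose leading eigenvalue is $\lambda(\Gamma)\ge 1$), and the analogous non-negative transfer matrix $N$ for $\Lambda$ in which $N_{lj}$ counts the components of $g^{-1}(\lambda_j)$ isotopic to $\lambda_l$. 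The central combinatorial fact, valid in minimal position, is that every preimage component $\gamma'\subset g^{-1}(\gamma_i)$ and every arc system $\mu$ transverse to $\gamma_i$ satisfy
\[
    \gamma' \cdot g^{-1}(\mu) \;=\; \deg(g|_{\gamma'})\cdot(\gamma_i\cdot\mu),
\]
since each transverse intersection of $\gamma_i$ with $\mu$ pulls back to exactly $\deg(g|_{\gamma'})$ intersections on $\gamma'$. Iterating gives analogous identities at each depth.

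For part~(1), assume $\Gamma\cdot\Lambda=0$ and fix disjoint representatives. Then all iterated preimages are disjoint on the nose, so any lift $\gamma'\subset g^{-n}(\gamma_i)$ has empty intersection with $g^{-n}(\Lambda)$. Because $\Gamma$ is irreducible, the directed graph underlying $M$ is strongly connected, hence every $\gamma_k$ is isotopic to some lift of some $\gamma_i$ under some iterate of $g$. A bigon-elimination argument in $\S^2\setminus X$ shows that the isotopy identifying the lift with $\gamma_k$ can be carried out without creating new intersections with $g^{-n}(\Lambda)$, so $\gamma_k\cdot g^{-n}(\Lambda)=0$; a further pull-back propagates the conclusion to all levels, establishing part~(1).

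For part~(2), with $\Gamma\cdot\Lambda>0$, the combinatorial identity yields on the one hand the lower bound
\[
    \gamma_k\cdot g^{-n}(\Lambda)\;\ge\;\sum_{j,l}(N^n)_{lj}\,I_{kl}
\]
(every lift of $\lambda_j$ isotopic to $\lambda_l$ contributes at least $I_{kl}$ intersections with $\gamma_k$), and on the other hand, by summing lift-intersections over preimages of $\gamma_k$, the identity $\sum_{\gamma'\subset g^{-n}(\gamma_k)}\gamma'\cdot g^{-n}(\Lambda) = d^n(\gamma_k\cdot\Lambda)$, which, when broken up by isotopy classes of lifts, produces an upper bound involving $M^n$. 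Pairing both bounds against the positive Perron--Frobenius left eigenvector $w$ of $M$ (available since $\lambda(\Gamma)\ge1$ and $M$ is irreducible) forces equality throughout. Equality in the intersection budget compels every lift $\gamma'\subset g^{-n}(\gamma_i)$ that contributes to $\tilde\Gamma(g^{\circ n})$ to have $\deg(g|_{\gamma'})=1$, so that $g^{\circ n}\colon\tilde\Gamma(g^{\circ n})\to\Gamma$ is a homeomorphism; it also forces each $\gamma_k$ to be isotopic to a unique component of $\tilde\Gamma(g^{\circ n})$, and leaves no room for $\tilde\Gamma(g^{\circ n})$ to meet $g^{-n}(\Lambda)\setminus\tilde\Lambda(g^{\circ n})$, since any additional intersection would violate the sharp count. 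Exchanging the roles of $\Gamma$ and $\Lambda$ (using $N$ in place of $M$) yields the symmetric conclusion.

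The main technical obstacle is showing that minimal position is preserved under iterated pull-back: one must verify that the isotopies needed to bring each lift into the reference isotopy class of some $\gamma_k$ or $\lambda_l$ do not create spurious intersections at earlier levels. This reduces to a careful bigon-elimination in the punctured sphere, bootstrapping off the hypothesis $\#(\Gamma\cap\Lambda)=\Gamma\cdot\Lambda$. Once this rigidity of minimal position is established, the dichotomy between cases~(1) and~(2) follows from standard Perron--Frobenius theory applied to the non-negative matrices $M$ and $N$.
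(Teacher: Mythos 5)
This theorem is not proved in the paper at all: the authors explicitly write ``The following theorem is Theorem~3.2 of \cite{PT}'' and close the statement with $\qed$, i.e.\ it is imported as a black box from Pilgrim and Tan Lei. There is therefore no internal proof to compare against, and what you have written should be judged as a freestanding reconstruction of a result whose published proof is genuinely technical.

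Your blueprint is in the right spirit --- the published argument also proceeds via the transfer matrices for $\Gamma$ and $\Lambda$, multiplicativity of intersection numbers under pullback in minimal position, and a Perron--Frobenius analysis --- but as written it has real gaps. First, the step ``pairing both bounds against the Perron--Frobenius left eigenvector $w$ forces equality throughout'' is the entire content of part~(2), and you do not actually exhibit the inequality chain whose saturation forces $\deg(g|_{\gamma'})=1$ for the isotopic lifts, uniqueness of the isotopic preimage component, and the disjointness assertion $\tilde\Gamma(g^{\circ n})\cap(g^{-n}(\Lambda)\setminus\tilde\Lambda(g^{\circ n}))=\emptyset$; these are three separate conclusions and it is not clear from your sketch which equality gives which. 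Second, your ``central combinatorial fact'' --- that intersection numbers, not merely raw intersection counts, multiply by $\deg(g|_{\gamma'})$ under pullback --- is exactly the preservation of minimal position under covering maps, which you yourself flag as the main obstacle; asserting it and deferring it is not a proof. Third, in part~(1) you invoke irreducibility to produce, for a given $k$, a lift isotopic to $\gamma_k$, but irreducibility of $M$ only guarantees $(M^n)_{ij}>0$ for \emph{some} $n$ depending on $(i,j)$, not for every $n$; you need the separate observation that every column (equivalently, row) of $M$ is nonzero and then an induction on $n$ to reach ``$\Gamma\cdot g^{-n}(\Lambda)=0$ for all $n\geq 1$,'' and the phrase ``a further pull-back propagates the conclusion to all levels'' does not supply it. Finally, a small but telling confusion: the existence of a positive Perron left eigenvector follows from irreducibility and nonnegativity of $M$ alone; the hypothesis $\lambda(\Gamma)\geq 1$ has nothing to do with it and must enter the argument elsewhere (it is what prevents the intersection budget from decaying to zero). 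In short, the proposal is a plausible outline of the Pilgrim--Tan Lei strategy, but the passages you label as ``technical obstacles'' are precisely where the theorem is proved, and they are left open.
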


\subsection{The Realization of Abstract Newton Graphs}

Our last proof shows that every abstract Newton graph is realized by a postcritically fixed Newton map, unique up to affine conjugation.

\begin{proof}[Proof of Theorem \ref{Thm_Realization}]
Let $(\Gamma,g)$ be an abstract Newton graph in the sense of Definition~\ref{Def_NewtonGraph} and let  $\ovl g$ be its regular extension to $\S^2$; it always exists and is a branched self-covering of $\Sphere$ (using Definition~\ref{Def_NewtonGraph}, condition~\eqref{Cond_Extension}; see the remark after that definition) and has degree $d_\Gamma$ (condition \eqref{Cond_Degree}). Let $X:=\Gamma'$ be the set of vertices of $\Gamma$. Let $\Deltahat\subsetneq\Gamma$ be the abstract channel diagram of $\Gamma$ and let $v_0,v_1,\dots,v_{d_\Gamma}$ be the vertices of $\hat\Delta$ so that $v_0$ is the distinguished vertex; these exist by Definition~\ref{Def_NewtonGraph}, condition~\eqref{Cond_Diagram}. 
By condition (\ref{Cond_Branch}), the vertices $v_1,\dots,v_{d_\Gamma}$ of $\Deltahat$ are critical points of $\ol{g}$. Since $d_{\Gamma}\geqslant 3$, the orbifold of $\ol{g}$ is hyperbolic and it suffices to show that $(\ol{g},X)$ has no irreducible multicurve obstruction: it then follows from Theorem \ref{Thm_Thurston} that $\ol{g}$ is Thurston equivalent to a rational map $f$ of degree $d_\Gamma$, and the latter is unique up to M\"obius transformation. Then $f$ has $d_\Gamma+1$ fixed points, $d_\Gamma$ of which are superattracting because $\ol{g}$ has marked fixed critical points $v_1,\dots,v_{d_\Gamma}$. The last fixed point $v_0$ must then be repelling \cite[Corollaries 12.7 and 14.5]{Milnor} and after possibly conjugating $f$ with a M\"obius transformation, we may assume that it is at $\infty$. Now it follows from Proposition~\ref{Prop_Head} that $f$ is a Newton map. It is unique up to a M\"obius transformation fixing $\infty$, hence up to affine conjugacy.

The maps $f$ and $\ovl g$ are Thurston equivalent; this is equivalent to the fact that $\ovl g$ can be precomposed with an isotopy relative to the marked points so that $f$ and $\ovl g$ are topologically conjugate. Since $\ovl g$ is defined in its construction only up to such an isotopy, we may as well assume that $f$ and $\ovl g$ are topologically conjugate. This implies in particular that there is a graph homeomorphism between $\Deltahat$ and the channel diagram of $f$ (the image of the channel diagram of $f$ is a channel diagram of $\ovl g$, and any additional edge in the channel diagram of $\ovl g$ would yield a curve that connects a fixed point in $\C$ of $f$ to $\infty$ that is homotopic to its image, and every such curve is already homotopic to an invariant access of $f$). By taking backwards images it follows that we even have a graph homeomorphism between $(\Gamma,g)$ and the Newton graph of $f$ that is compatible with the dynamics, as in the proof of Lemma~\ref{Lem:EquivalentGraphs} (note that $(\Gamma,g)$ contains all information about critical points of $\ovl g$), so $(\Gamma,g)$ is realized by $f$.

It remains to prove the claim that $(\ovl g,X)$ indeed does not have an irreducible multicurve obstruction. Suppose by way of contradiction that $\Pi$ is an irreducible multicurve obstruction for $(\ol{g},X)$ and let $\gamma\in\Pi$. Then $\gamma$ is an essential simple closed curve in $\S^2\setminus X$. Each edge $\alpha$ of $\Deltahat$ forms an irreducible arc system, so Theorem \ref{Thm_ArcsInterOb} implies (in both cases stated) that $\gamma\cdot (\ol{g}^{-n}(\alpha)\setminus\alpha)=0$ for all $n \geqslant 1$. Since this is true for all edges of $\Deltahat$ and each edge in $ \ovl{\Gamma\sm\Deltahat}$ is an iterated preimage of $\Deltahat$ (condition~\eqref{Cond_Preimages} of Definition~\ref{Def_NewtonGraph}), it follows that $\gamma\cdot \left(\ovl{\Gamma\sm\Deltahat}\right)=0$. But since $\ol{\Gamma\setminus\Deltahat}$ is connected (condition \eqref{Cond_Connected}) and contains $X\setminus\{v_0\}$, this means that $\gamma$ is peripheral, a contradiction. Therefore, $(\ovl g,X)$ does not have a multicurve obstruction and is thus Thurston equivalent to a postcritically fixed Newton map.

The last claim in Theorem~\ref{Thm_Realization} concerns the case that $f$ realizes two abstract Newton graphs $(\Gamma_1,g_1)$ and $(\Gamma_2,g_2)$. By definition, this means that $(\Gamma_1,g_1)$ and $(\Gamma_2,g_2)$ are graph equivalent to $f$ with its Newton graph, and hence to each other. 
\end{proof}

In conclusion, Theorem~\ref{Thm_NewtonGraph} provides a map ``Newton $\to$ Graphs'' from postcritically fixed Newton maps (modulo affine conjugacy) to abstract Newton graphs (modulo graph equivalence), and Theorem~\ref{Thm_Realization} provides a map ``Graphs $\to$ Newton'' in the opposite direction that is an inverse to the ``Newton $\to$ Graphs'' map; this proves that the map ``Newton $\to$ Graphs'' injective and the ``Graphs $\to$ Newton'' map is surjective. By Theorem~\ref{Thm_Realization}, the map ``Graphs $\to$ Newton'' is also injective and hence bijective, and the map ``Newton $\to$ Graphs'' is its inverse. 

This concludes the classification of postcritically fixed Newton maps in terms of abstract Newton graphs. As mentioned earlier, this result has been extended in \cite{LMS1,LMS2} to a classification of postcritically \emph{finite} Newton maps: in order to treat critical orbits that terminate in cycles of length two or greater, one needs to treat embedded periodic Hubbard trees (so that classification builds upon a classification of polynomial Hubbard trees).


\end{document}